\newcommand{\fin}{\hspace*{\fill}$\square$\vspace*{2mm}}
\theoremstyle{plain}
\newtheorem{theorem}{Theorem}[section]
\newtheorem{lemma}[theorem]{Lemma}
\newtheorem{corollary}[theorem]{Corollary}
\theoremstyle{definition}
\newtheorem{definition}[theorem]{Definition}
\theoremstyle{remark}
\newtheorem{remark}[theorem]{\sc Remark}
\newtheorem{example}[theorem]{\sc Example}
\def\bC{{\mathbb C}}
\def\bN{{\mathbb N}}
\def\bP{{\mathbb P}}
\def\bR{{\mathbb R}}
\def\bX{{\mathbb X}}
\def\cA{{\mathcal A}}
\def\cB{{\mathcal B}}
\def\cC{{\mathcal C}}
\def\cM{{\mathcal M}}
\def\ity{\infty}
\def\e{\varepsilon}
\def\max{{\rm max}}
\def\dist{{\rm dist}}
\def\Sing{{\rm Sing}}
\def\const.{{\rm const.}}
\def\im{{\rm Im}}
\def\m{{\setminus}}
\begin{document}
\title[Atypical points at infinity]{Atypical points at infinity and algorithmic detection of the bifurcation locus of real polynomials}

\author{Luis Renato G. Dias}
\address{Faculdade de Matem\'atica, Universidade Federal de Uberl\^andia, Av. Jo\~ao Naves de \'Avila 2121, 1F-153 - CEP: 38408-100, Uberl\^andia, Brazil.}
\email{lrgdias@famat.ufu.br}

\author{\sc Cezar Joi\c{t}a}
\address{Institute of Mathematics of the Romanian Academy, P.O. Box 1-764,
 014700 Bucharest, Romania.} 
\email{Cezar.Joita@imar.ro}

\author{Mihai Tib\u{a}r}
\address{Univ. Lille, CNRS, UMR 8524 -- Laboratoire Paul Painlev\'e, F-59000 Lille,
France}  
\email{mihai-marius.tibar@univ-lille.fr}

\subjclass[2010]{14D06, 14Q20, 58K05, 57R45, 14P10, 32S20, 58K15}

\keywords{bifurcation locus, real polynomial maps, effective algorithmic detection}

\thanks{LRGD and MT acknowledge partial support by the Math-AmSud grant 18-MATH-08. LRGD acknowledges the CNPq-Brazil grant 401251/2016-0 and 304163/2017-1.  CJ acknowledges the CNCS grant PN-III-P4-ID-PCE-2016-0330.  
The authors  acknowledge  support by the Labex CEMPI (ANR-11-LABX-0007-01).}

\begin{abstract} 
We show that the variation of the topology at  infinity of a two-variable polynomial function 
is localisable at a finite number of  ``atypical points'' at infinity.   We construct an effective algorithm with low complexity in order to detect sharply the bifurcation values of the polynomial function. 
\end{abstract}

\maketitle

\section{Introduction} 

 Let $f:\bR^2 \to\bR$ be a non-constant polynomial function.   One says that $\lambda \in\bR$ is a \emph{typical value} of $f$ if  there exists a disk $D\subset \bR$ centred at $\lambda$ such that the restriction $f_| : f^{-1}(D) \to D$ is a C$^{\infty}$ trivial fibration. If the contrary occurs, then we say that $\lambda$ is a \emph{bifurcation value} (or a \emph{atypical value}) of $f$, and then its corresponding fibre $f^{-1}(\lambda)$ is called atypical.  A critical value $c\in f(\Sing f)$ is a bifurcation value, since one cannot have a  C$^{\infty}$ trivial fibration at $c$.\footnote{If instead of C$^{\infty}$ we ask only ``C$^0$  trivial fibration'' in the definition of typical value, then simple examples like   $f(x,y) = x^{3}+y^{2}$  show that a singular fibre may be typical (the fibre $f^{-1}(0)$ in this case).} Nevertheless, well-known examples like $f(x,y) = x + x^{2}y$ show that non-singular fibres may also be atypical (the fibre $f^{-1}(0)$ in this case).
 
 The  \emph{bifurcation set}, denoted by $\cB_{f}$, is therefore the union of the finite set of singular values $f(\Sing f)$ with the set of   regular values which are atypical.

  In  the late 1990's, the study of the atypical values led to the discovery in \cite{TZ} of two possible phenomena which may occur at infinity: the \emph{vanishing} and the  \emph{splitting} at infinity.
These play a crucial role in the detection of the bifurcation set.   
  Subsequent  papers gave further methods to detect the vanishing and the splitting phenomena, either in large generality \cite{CP}, or for Newton nondegenerate polynomials \cite{INP},  or for rational functions on algebraic surfaces in $\bR^n$, see \cite{HN}. 
  An extension to  maps $f:\bR^n \to\bR^{n-1}$,  $n\ge 2$,  has been produced in  \cite{JoT}. 
 All these results are based on the fact that the affine fibres of $f$ are of dimension 1, that their connected components can be distinguished (algorithmically, see e.g. \cite{Sc}),  and that one can detect the vanishing and the splitting phenomena on each of them. For this later detection there are yet no finite algorithms\footnote{In \cite{CP}, the authors do not use the notion of ``splitting'' but something else called ``cleaving''. They  use parametrisations of affine curves without any truncation (whereas examples are worked out with some truncations), and effectivity is no concern either.}. It is thus still a problem to find bounded  algorithms and effective procedures for reaching solutions. 

  In case of polynomial functions of $n>2$ variables or,  more generally, for polynomial maps 
 $\bR^n \to\bR^{p}$ with $p\ge 1$, one can only estimate the set of the atypical values but not determine it precisely: \cite{JK, DT, DTT} by using a finite dimensional arc space, and \cite{JeT} using polar curves.
 Effective algorithmic procedures have been given in \cite{JK, DT, DTT}. Applications of the bifurcation set detection to the optimisation of polynomial mappings have been given recently, for instance in \cite{BS, KPT, Sa}.
  
 \smallskip 
 
 The new issue of our study is the effective algorithmic exact detection of the bifurcation  values. 
 
This is based on the localisation at infinity of the phenomena that produce the atypical values, which is  a new approach in the real setting (see also \cite{INP} for a  certain class of polynomials treated with toric methods), whereas in the complex setting this has been  achieved since more than 25 years ago,  under the condition that ``singularities at infinity are isolated'', see  \cite{ST, Pa1, Ti-compo} \cite[Section 2]{Ti-book}. More precisely,    the top homology group (with integer coefficients) of the generic fibre may have cycles that disappear at infinity when tending to the fibre over $\lambda$. If some cycles are vanishing at certain points at infinity of this fibre over $\lambda$, then those points are called \emph{atypical points at infinity}. The following localisation result for polynomials of two complex variables polynomials holds: 
 
 \smallskip 
\noindent  \textbf{Theorem} \cite{ST, Pa1, Ti-book}\footnote{The papers \cite{ST, Pa1, Ti-book} present more general statements in $n\ge 2$ complex variables, all of which amount to the same in case  $n=2$.}
\emph{Let $P: \bC^{2}\to \bC$ be a non-constant polynomial. A regular value $\lambda$ is atypical if and only if the fibre $P^{-1}(\lambda)$ has atypical points at infinity}.

 \medskip 
  We introduce and develop  here its real counterpart.
We denote  by $L^{\ity} := \{z=0\} \simeq \bP^{1}$ the line at infinity of the embedding $\bR^{2}\subset \bP^{2}$, we
 introduce (Definition \ref{d:atyp}) a subset $\cA^{\ity} \subset L^{\ity}\times \bR$ of \emph{atypical points at infinity} of $f$, which turns out to be a finite set (Theorem \ref{t:atyppoint}), and we show:

 \begin{theorem}[Localisation at infinity]\label{t:localis}
 A regular value $\lambda$ is a bifurcation value of $f$ if and only if there exists $p\in L^{\ity}$ such that
  $(p, \lambda)\in L^{\ity}\times \bR$ is  an atypical point at infinity.
\end{theorem}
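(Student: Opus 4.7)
The plan is to prove both implications via localisation at points of the line at infinity $L^{\ity}$, using a small open disk $D\subset \bR$ centred at the regular value $\lambda$ which contains no critical values of $f$. Because $\lambda$ is regular and critical values form a finite set, by shrinking $D$ we may ensure that $f_{|}\colon f^{-1}(D)\to D$ has no singularities in its source. Consequently, the only possible obstruction to $C^{\infty}$ triviality lies at infinity. The strategy is to transform this informal statement into a precise finite‐covering argument on the compactification $\bR^{2}\subset \bP^{2}$.

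For the implication $(\Leftarrow)$, I would proceed directly from the definition of $\cA^{\ity}$ (Definition \ref{d:atyp}). If $(p,\lambda)$ is atypical at infinity, then in any neighbourhood of $p$ in $\bP^{2}$ a vanishing or splitting phenomenon occurs for the family $\{f^{-1}(t)\}_{t\to \lambda}$. The presence of such a phenomenon changes, in a controlled way, the set of connected components or the topology of the ends of the fibres, and hence obstructs the existence of any $C^{\infty}$ trivialisation $f^{-1}(D)\to D$. This is essentially the real analogue, restricted to dimension one fibres, of the obstruction arguments of \cite{TZ, CP} and already underlies the methodology of the previous literature.

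The harder direction is $(\Rightarrow)$, which I would prove contrapositively: assuming no point of $L^{\ity}\times \{\lambda\}$ is atypical at infinity, I must produce a $C^{\infty}$ trivialisation of $f$ over some $D$. The construction would be via integration of a vector field $v$ on $f^{-1}(D)$ satisfying $df(v)=\partial/\partial t$. Locally such $v$ exists everywhere in the affine part since $\lambda$ is regular (use $\nabla f/\|\nabla f\|^{2}$). Near each $p\in L^{\ity}$ lying in the closure of $f^{-1}(D)$ in $\bP^{2}$, the hypothesis that $(p,\lambda)\notin\cA^{\ity}$ should yield a local trivialisation of $f$ in a neighbourhood $U_{p}$ of $p$ (intersected with the affine chart), and hence a local lift $v_{p}$ of $\partial/\partial t$. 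Using the finiteness of the relevant points of $L^{\ity}$ (only finitely many lie over $D$ since the closure $\overline{f^{-1}(\bar D)}\cap L^{\ity}$ is finite, a standard fact), I would cover a neighbourhood of $L^{\ity}\cap \overline{f^{-1}(\bar D)}$ by finitely many such $U_{p}$, and glue with the affine vector field via a $C^{\infty}$ partition of unity. Linearity of $df$ implies the glued vector field still satisfies $df(v)=\partial/\partial t$.

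The main obstacle will be \emph{completeness} of the flow of $v$: the integral curves must exist for all $t\in D$ without escaping to infinity within finite time. This is where the hypothesis that $(p,\lambda)\notin \cA^{\ity}$ for every $p\in L^{\ity}$ must be used quantitatively, not merely qualitatively, via estimates controlling the behaviour of the local lifts $v_{p}$ near $L^{\ity}$; essentially the non-atypicality must provide a Malgrange–type or \emph{tameness at infinity} condition uniform over $D$. I would isolate the proof of completeness as the technical core, relying on the properties established in the construction of $\cA^{\ity}$ and Theorem \ref{t:atyppoint}. Once completeness is achieved, the flow of $v$ yields the required $C^{\infty}$ trivialisation and $\lambda$ is typical, closing the contrapositive.
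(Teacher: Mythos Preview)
Your proposal has a genuine gap in the contrapositive direction. The step ``the hypothesis that $(p,\lambda)\notin\cA^{\ity}$ should yield a local trivialisation of $f$ in a neighbourhood $U_{p}$ of $p$'' is precisely the crux of the matter, and you have not justified it. The definition of $\cA^{\ity}$ only says that there is no local splitting and no vanishing loop in the family of curve germs $C_t$ at $p$; it does \emph{not} directly furnish a trivialisation over the full interval including $t=\lambda$. In fact, Example~\ref{ex:king} in the paper exhibits a point $(p,\lambda)\notin\cA^{\ity}$ (indeed $\lambda$ is typical) at which the family $g_t$ nevertheless has \emph{coalescing} critical points, so the local picture can be genuinely singular. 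A $\rho$-controlled lift (as in the proof of Lemma~\ref{p:ro-inf}) is only available under the stronger hypothesis $(p,\lambda)\notin\cM$, and $\cA^{\ity}\subsetneq\cM$ in general. You acknowledge that completeness of the glued flow is the ``technical core'', but you offer no mechanism for it beyond an appeal to unspecified tameness; without control of the Milnor set near $(p,\lambda)$, integral curves can still escape to infinity.

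The paper's route is quite different and exploits the fact that fibres are one-dimensional. Rather than attempting a direct local trivialisation at each point of $L^{\ity}$, it passes through \emph{global} vanishing and splitting phenomena (V$_\lambda$), (S$_\lambda$) of Definition~\ref{d:vanish}: Theorem~\ref{t:atyppoint} shows that the local conditions (V$_{(p,\lambda)}$), (S$_{(p,\lambda)}$) at some $p$ are equivalent to the global ones, and Theorem~\ref{t:vansplit} (extending \cite[Corollary~4.7]{JoT}) shows that absence of (V$_\lambda$) and (S$_\lambda$) forces typicality at infinity. The latter step is where the trivialisation is actually built, and it relies on Palmeira's theorem \cite{Pa} (via \cite[Proposition~2.7]{TZ}): once one knows that all fibres outside a large disk are closed and diffeomorphic to $\bR$, triviality follows. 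This is the ingredient your vector-field-and-partition-of-unity scheme is missing; it is what replaces the completeness estimate you could not supply.
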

 
We will actually prove a more general statement -- Theorem \ref{t:main2} -- in which we drop the regularity of the
fibre $f^{-1}(\lambda)$ and  allow $\Sing f^{-1}(\lambda)$  to be a compact set, and in exchange we replace ``bifurcation value'' with  the notion of ``atypical values at infinity'' to be defined just below.  Our proofs then show that the statement of Theorem \ref{t:main2} holds for any restriction $F = f_{| \bR^{2}\m D}$ where $D$ is some disk. 

So,  in order to focus on the problems that occur at infinity and to separate them from the behaviour in the rest of the space\footnote{Let us remark here that the sets  $f(\Sing f)$ and $\cA_{f}$ may be disjoint, see Example \ref{ex:1}.}, it is natural to  introduce\footnote{See also \cite[Definition 1.1.5]{Ti-book}, \cite[Theorem 4.7]{JoT}, \cite[Definition 2.1]{DT}, \cite[Definition 2.1]{DTT}, \cite[Page 1]{INP}, \cite[Page 1]{JK}.} the set of  \emph{atypical values at infinity}:

  \begin{definition}\label{d:at-ity}
		Let $f:\bR^2 \to\bR$ be a polynomial function. We say that $\lambda$ is a \emph{typical value at infinity} of $f$ if  there exists an interval $I := ]\lambda- \eta , \lambda +\eta[ \subset \bR$ centred at $\lambda$ and a compact set $K\subset\bR^2$ such that the restriction $f_| : f^{-1}(I) \setminus K \to I$ is a  C$^{\infty}$   trivial fibration. If the contrary occurs, then we say that $\lambda$ is an \emph{atypical value at infinity} of $f$, and then its corresponding fibre $f^{-1}(\lambda)$ is also called \emph{atypical at infinity}. 
		
		We denote by $\cA_{f}$ the atypical values at infinity. 
	\end{definition}

  The proof of Theorem \ref{t:localis} via Theorem \ref{t:main2} relies on the extension, in the new setting of Definition \ref{d:at-ity},   of the characterisation  of regular bifurcation values from \cite[Corollary 4.7]{JoT} in terms of vanishing and splitting as defined in Definition \ref{d:vanish}  (Theorem \ref{t:vansplit}).   
  
  We then show that our detection problem reduces to the computation of the
  local topological type of a real algebraic curve and its variation in a one-parameter family.
  We explain how this compares to King's approach to the topological triviality of a family of function germs via his notion of \emph{coalescing} \cite{Ki}. 
 
In some more detail, our reduction can be summarised as follows. 
By using the Euclidean distance function, we define a finite subset $\cM$ of points $(p, \lambda)\in \bP^{1}\times \bR$ containing the set $\cA^{\ity}$ of atypical points at infinity.  We use Puiseux expansions truncated at a certain level in order to embed $\cM$ into a finite set which can be reached in finitely many steps, \S \ref{ss:M}.
Then we construct a sharp algorithmic test to be applied at each of the points of this finite set  in order to select precisely the atypical points, see \S \ref{ss:disk} and \S \ref{ss:algovanish}. The algorithms are concentrated in \S \ref{s:algo}.



 \section{Localisation of the variation of topology at infinity} 
 
Let us show that the bifurcation values can be detected by the local behaviour at certain points at infinity.
 
We recall here some standard notations that we shall use. We denote by $x, y$ the coordinates of the affine space $\bR^2$ and by $[x:y:z]$ a point in the projective space $\bP^2$, where $\bR^2$ is identified with the open set $\{[x:y:z] \in \bP^2 \mid z \neq 0 \}$ of $\bP^2$ through the map  $(x,y) \to [x: y: 1]$, and where $L^{\infty} := \{[x:y:z] \in\bP^2 \mid z=0 \} \simeq \bP^{1}$ will denote the \emph{line at infinity}.  

Let $\bX :=\{([x:y:z],t) \in\bP^2\times \bR \mid \tilde{f}(x,y,z) -tz^d = 0\}$, where $\tilde{f}$ denotes the homogenization of $f$ in the variables $z$. \footnote{The space $\bX$ is not necessarily the closure in $\bP^2\times \bR$ of the graph of $f$ like it is the case over $\bC$, see \cite[Note 1.1.3]{Ti-book}, for instance in the example $f(x,y)=x^4 + y^2$.}

Let $\bX^{\infty}:=\bX\cap (L^{\infty}\times\bR)$. We denote by $\sigma: \bX\to \bP^2$ and $\tau: \bX\to \bR$ the canonical projections, respectively. Since $\sigma(\bX^{\infty}) = \{[x:y:0] \in L^{\infty} \mid f_d(x,y)=0  \}$, it follows that $\sigma(\bX^{\infty})$ is a finite set.  
We identify $\bR^{2}$ with its embedding into the graph of $f$ followed by the inclusion of this graph into $\bX$,  namely $(x,y) \mapsto (x, y, f(x,y)) \mapsto ([x: y: 1], f(x,y))\in \bX$.  For instance a ball $B_{R}\subset \bR^{2}$ will be viewed as a subset of $\bX$ via this identification.

\smallskip

In the following we focus on  a fibre $f^{-1}(\lambda)\subset \bR^{2}$ with at most  isolated singularities (or, more generally, with compact singular locus) and study it in a neighbourhood of infinity.

\subsection{Atypical points at infinity}\label{ss:atyp}

Without loss of generality, let $p=[0:1:0]\in L^{\ity}$, and consider a local chart $U \simeq\bR^{2}\subset \bP^{2}$ with origin at $p$. Let $g_t: (\bR^{2},0) \to (\bR, 0)$,   $g_t(x,z) :=\tilde{f}(x,1,z) -t z^d$. 
This is a local family of polynomial function germs $g_t$ at the point $p$, 
also defining a family of algebraic curve germs $C_{t}:= \{g_t =0\}$ at $(0,0)\in \bR^2$,  
 for $t$ close enough to $\lambda$,  from the right as well as from the left.   All these data depend on the coordinates and on the local chart $\bR^2$ at infinity.

 The notation $t\to \lambda$ will mean that $t$ tends to $\lambda$ \emph{either from the right or from the left}. 
 

\begin{definition}\label{d:split-van}
Let $p\in L^{\ity}$.	We say that $f$ has a \emph{splitting at $(p,\lambda)\in L^{\ity}\times \bR$}, shortly (S$_{(p,\lambda)}$),  if there is a small disk $D_{\e}$ at $p\in U \simeq \bR^{2}$ such that the representative of 
	the curve $C_{t}$ in $D_{\e}$ has a connected component $C_{t}^{i}$ such that
	$C_{t}^{i} \cap \partial D_{\e} \not= \emptyset$  for all $t< \lambda$ (or for all $t> \lambda$) close 
	enough to  $\lambda$, and that the Euclidean distance $\dist(C_{t}^{i}, p) \not=0$ and tends to 0 when $t\to \lambda$.
	
We say that $f$ has a \emph{vanishing loop at $(p,\lambda)\in L^{\ity}\times \bR$}, shortly (V$_{(p,\lambda)}$), if there is a small disk $D_{\e}$ at $p\in \bR^{2}$ such that  $C_{t}\cap D_{\e}\m \{p\}$ has a non-empty connected component $C_{t}^{i}\m \{p\}$ with $C_{t}^{i} \cap \partial D_{\e}= \emptyset$ for all $t< \lambda$ (or for all $t> \lambda$) close 
	enough to  $\lambda$, such that
 $\lim_{t\to\lambda} C_t^i  \cap D_{\e} = \{p\}$.
\end{definition}
\begin{definition}[Atypical point at infinity]\label{d:atyp}
We say that $(p, \lambda)\in L^{\ity}\times \bR$ is an \emph{atypical point at infinity} of $f$ if there is (S$_{(p,\lambda)}$) or (V$_{(p,\lambda)}$).  We denote by $\cA^{\ity}$ the set of atypical points at infinity of $f$.
\end{definition}

The following set turns out to play the key role in the detection of the atypical points at infinity:
 	\begin{definition}[Milnor set at infinity]\label{d:milnor}
 		Let $f\colon\bR^2\to\bR$ be a polynomial function and let $\rho:\bR^2\to\bR$, $\rho(x,y)=x^2+y^2$, be the square of the Euclidean distance from the origin. The \emph{Milnor set of $f$}, denoted by $M(f)$,  is the critical locus of the mapping $(f, \rho)\colon\bR^2\to\bR^{2}$. 
 \end{definition}

\begin{remark}
 	The Milnor set is defined as $M(f)= \{ (x,y) \in\bR^2 \mid h(x,y) = 0 \}$, where $h$ is the polynomial function $h(x,y) = y\frac{\partial f}{\partial x}(x,y) - x\frac{\partial f}{\partial y}(x,y)$. Thus  $\dim M(f)=2$ if   and only if $h\equiv 0$ (see for instance \cite[page 10]{Mi}).  Moreover, $\dim M(f) =2$ if and only if   $f = P \circ \rho$ for some polynomial $P$ of one real variable. In this case the fibres of $f$ are compact (unions of circles) or empty,  and we have $\cB_{f} = f(\Sing f)$. 
\end{remark}

	From now on we shall assume that $f$ is \emph{primitive} in the strict sense that $f \not= P \circ \rho$, where $\rho$ denotes the square of the Euclidean distance function.  
In this case one has $\dim M(f) = 1$.  
	
Nevertheless, the Milnor set may intersect  some fibres along sets of dimension 1, as the following example shows.  
\begin{example}\label{ex:1}
 $f(x,y) = (x^2+ y^2 - 1)(x^2 + 1 )$,where $\Sing f =\{ (0,0)\}$ and $f(\Sing f)=-1$, and $M(f)=\{(x,y) \mid 2xy(x^2+y^2 - 1)=0 \}$.  We have that  $f^{-1}(0)=\{ (x,y) \mid x^2 + y^2 -1 =0 \}$ is a regular fibre but $f^{-1} (0) \cap M(f) = f^{-1}(0)$.
 Here we have $\cA_{f} =\emptyset$ and $\cB_{f} = \{ -1\}$.
\end{example}


\subsection{Proving Theorem \ref{t:localis}} \label{ss:proofmain}\ 

We  enlarge the setting and prove here several results for the restriction of the polynomial function $f$ to the complement of some compact set (which may be empty, and then we get just $f$). 

To be more precise, we consider restrictions of the type $F:= f_{|}: \bR^{2}\m \overline{D_R} \to \bR$ for some $R\ge 0$, where $D_{R}$ denotes the open disk at the origin of radius $R$.
A non-singular fibre $X_t := F^{-1}(t)$ of $F$ is a disjoint union of one dimensional smooth manifolds  $X_t=\sqcup_{j}X_t^{j}$. Each connected component, denoted  $X_t^{j}$,  is diffeomorphic either to the circle $S^1$ (and we will call it ``circle'') or to $\bR$ (and we will call it ``line''). 

Theorem \ref{t:localis} will follow from the following more general statement, under the above notations, and of the  notations of Definitions \ref{d:at-ity} and \ref{d:atyp}.
\begin{theorem}\label{t:main2}
 Let  $f^{-1}(\lambda)$ be a fibre with a compact singular set. Then  $\lambda \in \cA_{f}$
if and only if there exists $p\in L^{\ity}$ such that  $(p, \lambda) \in \cA^{\ity}$.
\end{theorem}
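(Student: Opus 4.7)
My plan for Theorem \ref{t:main2} is to localise the variation of the topology to the finite set $\sigma(\bX^{\infty}) = \{p_1,\ldots,p_r\}$, combining a local analysis at each $p_j$ with an Ehresmann-type argument on the ``bulk'' complement. Throughout, I work in the compactification $\bX$ with the projection $\tau:\bX\to\bR$, so that $\tau^{-1}(t)$ realises the fibre $f^{-1}(t)$ together with its behaviour at infinity; since $\sigma(\bX^{\infty})$ is finite, only finitely many points at infinity can contribute.

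For the implication $\cA^{\ity} \Rightarrow \cA_f$, fix $(p,\lambda)\in\cA^{\ity}$ and consider a local chart $U\simeq\bR^{2}$ at $p$ with the disk $D_{\e}$ from Definition \ref{d:split-van}. If (S$_{(p,\lambda)}$) holds, then pulling $C_t^{i}$ back to affine coordinates produces a family of arcs of $f^{-1}(t)$ which (say for $t$ approaching $\lambda$ from one side) escape to infinity in directions accumulating on $p$, with $\dist(C_t^{i},p) \neq 0$ but tending to $0$. For any fixed compact $K\subset\bR^{2}$ and any interval $I\ni\lambda$, the combinatorial pattern of unbounded branches of $f^{-1}(t)\setminus K$ near $p$ would be conjugated by a C$^{\infty}$ trivial fibration, whereas the splitting data gives a one-parameter family of branches with no counterpart at $t=\lambda$ inside $K$; this forbids local triviality. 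If instead (V$_{(p,\lambda)}$) holds, then the vanishing component $C_t^{i}\setminus\{p\}$ (which is entirely inside $D_{\e}$ and collapses to $p$) contributes an extra connected component of $f^{-1}(t)$ lying outside every fixed compact $K\subset\bR^2$ for $t\neq\lambda$ close to $\lambda$, again contradicting C$^{\infty}$ triviality of $f^{-1}(I)\setminus K \to I$. So $\lambda\in\cA_f$.

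For the converse, assume no $(p_j,\lambda)$ lies in $\cA^{\ity}$. Choose pairwise disjoint small disks $D_j\subset U_j$ around each $p_j$, in the sense of Definition \ref{d:split-van}, small enough that the only obstructions to local triviality of $\tau$ near $\bX^{\ity}\cap\tau^{-1}(\lambda)$ would be of splitting or vanishing type. Then for each $j$, on both sides of $\lambda$, every connected component of $C_t\cap D_j$ either persists to $\partial D_j$ or stays uniformly bounded away from $p_j$, and no component collapses to $p_j$ in the limit; hence the local topological type of $C_t\cap D_j$ and its embedding in $D_j$ stays constant as $t$ varies in a small interval $I\ni\lambda$ (from both sides). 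This allows constructing, using a vector field tangent to the fibres of $\tau$ and controlled along the Milnor set $M(f)$ (which is $1$-dimensional, since $f$ is primitive) in each $D_j$, a C$^{\infty}$ trivialisation of $\tau$ restricted to a slightly shrunk neighbourhood of $p_j$ over $I$. Away from $\bigcup_j \sigma^{-1}(D_j)$, the set $\bP^{2}\setminus\bigcup_j D_j$ is a compact subset of $\bP^2$ disjoint from $L^{\ity}$, so $\sigma^{-1}(\bP^{2}\setminus\bigcup_j D_j)$ is a compact subset of the affine graph of $f$; after further removing the compact set $\Sing f^{-1}(\lambda)$ together with a small neighbourhood, $\tau$ is a proper submersion there, and Ehresmann's theorem (with boundary) yields a C$^{\infty}$ trivialisation over a possibly smaller $I$. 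A partition-of-unity gluing of the local and bulk trivialisations, projected back to $\bR^2$, exhibits a compact $K$ (containing $\Sing f^{-1}(\lambda)$ and the affine images of the transition region) such that $f_|:f^{-1}(I)\setminus K\to I$ is C$^{\infty}$ trivial, so $\lambda\notin\cA_f$.

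The main obstacle is the local step at each $p_j$: one must show that the bare combinatorial absence of (S) and (V) upgrades to a smooth trivialisation of the family $C_t\cap D_j$ and of the ambient $\tau|_{\sigma^{-1}(D_j)}$. This is where the Milnor set from Definition \ref{d:milnor} plays the crucial role, since its intersection with a small neighbourhood of $p_j$ furnishes the transverse curves along which the trivialising flow is defined; showing that this flow is complete and compatible between the two sides of $\lambda$ (so that the one-sided matching becomes a two-sided C$^{\infty}$ trivialisation) is the delicate part, and should exploit the characterisation that will be obtained in Theorem \ref{t:vansplit} relating absence of (V)/(S) to regularity of $\tau$ along the Milnor set at infinity.
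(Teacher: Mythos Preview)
Your forward implication is fine; as in the paper, once a local splitting or vanishing loop exists at some $(p,\lambda)$, it directly obstructs any C$^{\infty}$ trivialisation of $f_{|}:f^{-1}(I)\setminus K\to I$ for every compact $K$.

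The converse, however, has a genuine gap, and you essentially acknowledge it in your last paragraph. You assert that absence of (S$_{(p_j,\lambda)}$) and (V$_{(p_j,\lambda)}$) implies that ``the local topological type of $C_t\cap D_j$ and its embedding in $D_j$ stays constant'', and then that a $\rho$-controlled vector field trivialises $\tau$ near $(p_j,\lambda)$. Neither step is justified. The obstruction to a $\rho$-controlled lift is the presence of branches of $\overline{M(f)}$ through $(p_j,\lambda)$, and this can occur even when there is no local splitting or vanishing: see Example~\ref{ex:king}, where $(p,\lambda)\in\cM$ but $(p,\lambda)\notin\cA^{\infty}$. So the Milnor set does not disappear from your local box just because (S)/(V) are absent, and your proposed flow need not exist. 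Deferring this to Theorem~\ref{t:vansplit} does not close the gap, since that theorem concerns the \emph{global} conditions (V$_\lambda$)/(S$_\lambda$), not the local trivialisation you need.

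The paper avoids this local construction entirely and takes a different route. It introduces the global conditions (V$_\lambda$) and (S$_\lambda$) of Definition~\ref{d:vanish} as an intermediary, and proves the theorem as the composite of two equivalences: Theorem~\ref{t:vansplit} gives $\lambda\in\cA_f \Leftrightarrow$ (V$_\lambda$) or (S$_\lambda$), by reducing to \cite[Corollary~4.7]{JoT} applied to the restriction $F=f_{|\bR^{2}\setminus\overline{D_R}}$ (so the hard trivialisation work is imported from \cite{JoT, TZ}); and Theorem~\ref{t:atyppoint}(a),(b) gives (V$_\lambda$)$\Leftrightarrow\exists p$ with (V$_{(p,\lambda)}$) and (S$_\lambda$)$\Leftrightarrow\exists p$ with (S$_{(p,\lambda)}$), by an easy connectedness/limit argument on $\overline{X_t^{j}}$ inside $\bX$. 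Thus the paper never attempts to build a C$^{\infty}$ trivialisation in a disk at $(p_j,\lambda)$; it only compares the local (S)/(V) notions with the global ones, and lets the global fibration result do the rest.
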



Under the above notations,
let us state the definition of splitting and vanishing conditions, extending\footnote{One may compare our definition of vanishing at $\lambda$   to the contrary of  ``no vanishing at $\lambda$'' as stated in \cite{TZ}, \cite[Definition 3.1]{JoT}. Similarly, the precise contrary of what we call here splitting at $\lambda$  compares to the  notion of  ``strong non-splitting'' of \cite[Definition 4.3]{JoT}. The first part of the paper \cite{JoT}, as well as the whole paper  \cite{TZ}, use a slightly weaker notion of ``splitting'' than in  our Definition \ref{d:vanish}. The difference is that the  non-splitting of \cite{TZ} did not include the behaviour of compact components whereas our Definition \ref{d:vanish} does. See \cite[Example 3.2]{TZ} where a circle component ``splits'' into a line component.} those in \cite{TZ, JoT}. For the notion of limit of sets we refer to \cite[Definition 4.1]{JoT} and to the relevant articles cited in  \cite{JoT}.  More explicitly,  let $\{M_t\}_{t\in R}$ be a family of subsets of $\bR^{m}$.
Then $\lim_{t\to\lambda} M_t$ is by definition the set of all points $x\in\bR^{m}$
such that there exists a sequence  $t_k\in\bR$ such that $t_k\to\lambda$ (from one side, as the general convention in this paper) and a sequence of points  $x_k\in M_{t_k}$ such that $x_k\to x$.

\begin{definition}\label{d:vanish}
Let $\lambda \in \bR $  such that $\Sing f^{-1}(\lambda)$ is a compact set.

\noindent
One says that $f$ \emph{has a vanishing component at infinity at $\lambda$}, if
 $\lim_{t\to\lambda}\max_{j}\mathrm{inf}_{q\in X_t^{j}}\Vert q \Vert =\infty$.
 For short, we denote this by (V$_{\lambda}$). 

\smallskip
\noindent
 One says that \emph{$f$ is splitting at infinity at $\lambda$} if there is $\eta>0$ and  a continuous family of analytic  paths $\gamma_t:[0,1]\to X_t$ for $t\in ]\lambda-\eta, \lambda[$ or  for $t\in ]\lambda, \lambda+\eta[$,  such that: \\
 (1).  $\im \gamma_{t} \cap M(f) \not= \emptyset$, with  $\lim_{t\to \lambda}\| q_{t}\| = \infty$ for any 
 $ q_{t} \in \im \gamma_{t} \cap M(f)$, and \\
 (2). the limit set  $\lim_{t\to \lambda} \im \gamma_t$ is not connected.\footnote{Equivalently: (2'). there exist $j$ and  a continuous family of analytic  paths $\gamma_t:[0,1]\to X^{j}_t$ such that  the limit set  $\lim_{t\to \lambda} \im \gamma_t$ is not connected.}\\
For short, we denote this by (S$_{\lambda}$). 		 
\end{definition}
Let us remark that \label{l:vanishcompact}
   only non-compact connected components $X_t^{j}$ may be vanishing at infinity. Also, remark that we get equivalent notions if in this definition  we replace   $f$ by $F$ for any $R>0$, since only the behaviour of the fibres at infinity counts.

\begin{theorem} \label{t:vansplit}
 Let  $f^{-1}(\lambda)$ be a fibre with a compact singular set.
 
Then the value $\lambda$ of $f$ is atypical at infinity  if and only if  (V$_{\lambda}$) or (S$_{\lambda}$).
\end{theorem}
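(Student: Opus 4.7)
The plan is to prove the equivalence in Theorem \ref{t:vansplit} via its two implications, using the framework developed in \cite{TZ, JoT} as a template but adapted to the slightly stronger notions of vanishing and splitting stated in Definition \ref{d:vanish}. As a preliminary reduction, choose $R>0$ large enough that $\Sing f^{-1}(\lambda)\subset D_{R}$ (possible by the compactness hypothesis) and pass to the restriction $F := f_{|\bR^{2}\setminus \overline{D_{R}}}$. Atypicality at infinity, as well as the conditions (V$_{\lambda}$) and (S$_{\lambda}$), are invariant under this replacement, since all of them depend only on the behaviour of fibres outside arbitrarily large compact sets. After this reduction $F$ has no critical points in $F^{-1}(I)$ for a sufficiently small interval $I := \,]\lambda-\eta,\lambda+\eta[$, so the task is to show that the absence of both (V$_{\lambda}$) and (S$_{\lambda}$) is equivalent to the existence of a $C^{\infty}$ trivialization of $F:F^{-1}(I)\to I$.

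For the ``if'' direction, suppose first that (V$_{\lambda}$) holds. Then there is a one-sided sequence $t_{k}\to\lambda$ and components $X_{t_{k}}^{j_{k}}$ of $F^{-1}(t_{k})$ such that $\inf_{q\in X_{t_{k}}^{j_{k}}}\Vert q\Vert\to\infty$. A $C^{\infty}$ trivialization over a neighbourhood of $\lambda$ would induce, component by component, a bijection between the sets of connected components of nearby fibres, transporting each $X_{t}^{j}$ to a component of $F^{-1}(\lambda)$ at bounded distance from the origin, contradicting the escape to infinity. If (S$_{\lambda}$) holds, the continuous family of analytic paths $\gamma_{t}$ would be transported horizontally by the trivialization into a continuous family of paths within a single connected component of $F^{-1}(\lambda)$; but this is incompatible with the limit set $\lim_{t\to\lambda}\im\gamma_{t}$ being disconnected while each individual $\im\gamma_{t}$ is connected.

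For the ``only if'' direction we argue the contrapositive: assuming neither (V$_{\lambda}$) nor (S$_{\lambda}$), we construct a trivializing vector field $V$ on $F^{-1}(I)$, transverse to the fibres and satisfying $V(f)\equiv 1$, whose flow is complete. Away from the Milnor set $M(f)$, the canonical choice $V := \nabla f/|\nabla f|^{2}$ works locally and respects the fibres of $F$. The completeness of the flow at infinity is controlled through $\rho$: the negation of (V$_{\lambda}$) means $\max_{j}\inf_{q\in X_{t}^{j}}\Vert q\Vert$ stays bounded as $t\to\lambda$, so after possibly enlarging $R$ every connected component of every fibre near $\lambda$ meets a fixed compact annulus, which yields a uniform bound preventing trajectories from escaping in finite time. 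Near branches of $M(f)$ that tend to infinity, the naive $V$ is singular and must be replaced by smooth local lifts obtained from analytic parametrizations of those branches via the implicit function theorem; the negation of (S$_{\lambda}$) is precisely the statement that such local lifts can be glued, by a partition of unity, into a globally defined $C^{\infty}$ transverse vector field, because it forbids the asymptotic break-up of fibre pieces that would force inconsistent directional choices.

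The main obstacle is this last gluing: translating the asymptotic connectedness information carried by non-(S$_{\lambda}$) into a coherent global smooth lifting of $\partial/\partial t$ near the branches of $M(f)$ at infinity, and simultaneously certifying completeness of the integral flow from the uniform bound produced by non-(V$_{\lambda}$). Our Definition \ref{d:vanish} of splitting is stronger than the one in \cite{TZ,JoT} in that it also tracks circle components degenerating into line components, as in \cite[Example 3.2]{TZ}; consequently the gluing argument of \cite[Corollary 4.7]{JoT} must be revisited so that the trivialization constructed on the non-compact locus at infinity extends consistently across the compact components, which is the delicate technical step.
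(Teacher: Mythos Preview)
Your proposal takes a different route from the paper. The paper's proof is essentially a reduction to existing literature: it invokes Lemma~\ref{l:disk} to pass to the restriction $F = f_{|\bR^{2}\setminus \overline{D_R}}$, then cites \cite[Corollary 4.7]{JoT} (noting that its proof carries over verbatim with $F$ in place of $f$), and finally appeals to \cite[Proposition 2.7]{TZ}---a special case of Palmeira's theorem---which says that a submersion on a planar surface whose fibres are closed and diffeomorphic to $\bR$ is automatically a trivial $C^{\infty}$ fibration. No vector field is constructed; the topological content is delegated entirely to those references.

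You instead attempt to build a transverse vector field $V$ with $V(f)\equiv 1$ and complete flow. This is a legitimate alternative strategy in principle, but as written it has a genuine gap at exactly the point you flag as ``the delicate technical step.'' The claim that non-(V$_{\lambda}$) yields completeness of the flow is not justified: knowing that every fibre component meets a fixed compact annulus does \emph{not} bound $|\nabla f|^{-1}$ along trajectories, nor does it prevent a flow line from reaching infinity in finite time---for that one needs a $\rho$-controlled modification of $V$ outside a large disk (as in \cite[\S 3.1]{Ti-book}), which in turn requires transversality of large circles to the fibres, i.e.\ control of $M(f)$ at infinity. Likewise, the assertion that non-(S$_{\lambda}$) ``is precisely the statement that such local lifts can be glued'' is a restatement of what you want rather than an argument; translating the limit-set connectedness condition (2) of Definition~\ref{d:vanish} into a coherent choice of lift across all unbounded branches of $M(f)$ is the whole content of the ``only if'' direction, and you have not supplied it. The paper sidesteps both issues by outsourcing them to \cite{JoT} and Palmeira's theorem.
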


The statement of Theorem \ref{t:vansplit} can be viewed as an extension of \cite[Corollary 4.7, Theorem 4.8]{JoT} and  of \cite[Theorem  2.5]{TZ} in the spirit of our new Definition \ref{d:vanish}.  With respect to \cite{TZ}, our new Definition \ref{d:vanish} of (S$_{\lambda}$) encompasses, like the one in  \cite{JoT}, a more general splitting phenomenon, including the case when a circle splits at infinity into a line, like  in  \cite[Example 3.2]{TZ}. 

\begin{proof}[Proof of Theorem \ref{t:vansplit}]
From the definitions it follows that the existence of either  (V$_{\lambda}$) or (S$_{\lambda}$) implies that $\lambda$ is an atypical value, so this implication is trivial.  Let us show the converse.

We first need  to define a large enough disk outside where to concentrate our study according to Definition \ref{d:at-ity}.
We use the following result which can be derived from the Tarski-Seidenberg principle (see e.g. \cite[pag. 35]{Ti-book}):	
\begin{lemma}\label{l:disk}
 Let $f:\bR^2 \to \bR $ be a polynomial function, and let $f^{-1}(\lambda)$ be a fibre with a compact set of singular points.  Then there is $R\gg 1$  such that $f^{-1}(\lambda) \cap M(f) \setminus D_{R} =\emptyset$. \fin
\end{lemma}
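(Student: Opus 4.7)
The plan is to show that the algebraic set $S := f^{-1}(\lambda) \cap M(f)$ is bounded. Writing $M(f) = \{h=0\}$ with $h = y f_x - x f_y$, the set $S$ is cut out by two polynomial equations, hence is a real algebraic subset of $\bR^{2}$. If $S$ is bounded, any $R$ larger than $\sup_{(x,y)\in S}\|(x,y)\|$ works; so I will assume for contradiction that $S$ is unbounded and derive a contradiction.

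By the curve selection lemma at infinity (a standard consequence of Tarski-Seidenberg applied to the semi-algebraic set $S$), there exists a real analytic arc $\gamma\colon [0,\varepsilon)\to S$ with $\|\gamma(s)\|\to\infty$ as $s\to 0^{+}$. Since $f\circ\gamma\equiv\lambda$, differentiating gives $\langle \nabla f(\gamma(s)), \gamma'(s)\rangle \equiv 0$. Moreover, $\gamma(s)\in M(f)$ means that the vectors $\nabla f(\gamma(s))$ and $\nabla\rho(\gamma(s))$ are linearly dependent for every $s$. For $s$ sufficiently small we have $\gamma(s)\neq 0$, so $\nabla\rho(\gamma(s))=2\gamma(s)\neq 0$.

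Two cases arise. If $\nabla f(\gamma(s))\not\equiv 0$ on the arc, then by real-analyticity $\nabla f\neq 0$ on a dense open subset, where we may write $\nabla f = a(s)\nabla\rho$ for some analytic function $a$. Combining with $\langle \nabla f,\gamma'\rangle\equiv 0$ yields $a(s)\langle \nabla\rho(\gamma(s)), \gamma'(s)\rangle\equiv 0$ on that open set, hence $\frac{d}{ds}\rho(\gamma(s))\equiv 0$ there. By continuity $\rho\circ\gamma$ is constant along the whole arc, which contradicts $\|\gamma(s)\|\to\infty$. If instead $\nabla f\equiv 0$ along $\gamma$, then $\gamma([0,\varepsilon))\subset \Sing f\cap f^{-1}(\lambda)=\Sing f^{-1}(\lambda)$, contradicting the hypothesis that this singular locus is compact.

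The only technical point that deserves attention is the direction of the linear dependence: one must express $\nabla f$ as a multiple of $\nabla\rho$, not the other way round, since $\nabla f$ may vanish on parts of $M(f)$ whereas $\nabla\rho\neq 0$ away from the origin — and this is precisely available because $\gamma$ moves towards infinity. This is the main (and only) subtlety; the remainder is a straightforward application of the curve selection lemma together with the defining relation of the Milnor set.
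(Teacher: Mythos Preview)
Your proof is correct and follows exactly the route the paper indicates: the paper does not spell out an argument but merely says the lemma ``can be derived from the Tarski--Seidenberg principle'', and your use of the curve selection lemma at infinity together with the defining relation of $M(f)$ is precisely the standard way to implement that hint. One cosmetic slip: the arc should be $\gamma\colon(0,\varepsilon)\to S$ rather than $[0,\varepsilon)$, since $\|\gamma(s)\|\to\infty$ as $s\to 0^{+}$.
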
   
We may then invoke the following result:

\smallskip

\noindent
  \cite[Corollary 4.7]{JoT}: \emph{A non-singular fibre $f^{-1}(\lambda)$ of a polynomial function   $f:\bR^2 \to\bR$ is atypical if and only if 
 there are either vanishing or splitting components at $\lambda$,}
 
 \medskip
 
 \noindent and apply it here,  instead of $f$ itself,  to the restriction $F = f_{|\bR^{2}\m \overline{D_R}}$ with $R$ as in Lemma \ref{l:disk}. 
 The proof in case of the restriction $F$  follows the one in \cite{JoT} faithfully and consist of a stepwise  reduction  to the situation of a family of non-compact connected components of  the fibres of $F$ over a small enough interval $I := ]\lambda- \eta , \lambda +\eta[$ such that $f^{-1}(I) \cap M(f) \cap \partial D_{R} = \emptyset$, i.e. where the fibres of $F$ are transversal to the boundary circle $\partial D_{R}$.
 
In order to conclude, we may then apply the following result from  \cite{TZ} which is in turn a particular case of Palmeira's theorem \cite{Pa}:
 \smallskip
 
 \noindent
  \cite[Proposition 2.7]{TZ}: 
\emph{Let $M \subseteq \bR^2$ be a smooth submanifold of 
dimension $2$ and let $g : M \to \bR$ be a smooth function without singularities and such that all its fibres $g^{-1}(t)$ are closed in $M$ and diffeomorphic to $\bR$.  Then $g$ is a {\rm C}$^{\infty}$ trivial fibration.}

\medskip

This ends the proof of Theorem \ref{t:vansplit}.
\end{proof}

\subsection{End of the proof of Theorem \ref{t:main2}}\label{ss:milnorset}\

Let $\overline{M(f)} \subset \bX$  denote the closure in $\bX$ of the Milnor set. This is of course an analytic set.

\begin{theorem}\label{t:atyppoint}\ 
Let  $f^{-1}(\lambda)$ be a fibre with a compact singular set.  Then:
\begin{enumerate}
 \rm \item \it  (V$_{\lambda}$)  $\Longleftrightarrow$  $\exists p\in L^{\ity}$ such that (V$_{(p,\lambda)}$).
  \rm \item \it  (S$_{\lambda}$)   $\Longleftrightarrow$  $\exists p\in L^{\ity}$ such that (S$_{(p,\lambda)}$).
    \rm \item \it  The set $\cA^{\ity}$ of atypical points at infinity  is contained in  the set $\cM := \overline{M(f)} \cap L^{\infty}\times \bR$, in particular it is finite.
  \end{enumerate}
  \end{theorem}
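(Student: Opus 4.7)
My approach is to use the dictionary between the affine behaviour of connected components of the fibres of $f$ in $\bR^2$ and their image in a local chart at a point $p\in L^{\ity}$ of the projective compactification $\bX$. I would first settle the finiteness in part~(c): from the decomposition $\bX^{\ity}=\bigsqcup_{i=1}^{k}\{p_i\}\times\bR$, where $\{p_1,\ldots,p_k\}$ are the zeros of the leading form $f_d$ on $L^{\ity}$, and from the observation that the affine algebraic curve $M(f)$ has finitely many Puiseux branches at each $p_i$ along each of which $f$ has a unique limit in $\bR\cup\{\pm\infty\}$, it follows that $\cM=\overline{M(f)}\cap\bX^{\ity}$ is finite. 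For part~(a), the backward direction is a direct translation: a family of shrinking loops $C_t^i$ in the local chart at $p$ pulls back to compact components $X_t^j\subset f^{-1}(t)$ whose distance to the origin tends to infinity, giving (V$_\lambda$). For the forward direction, given (V$_\lambda$) I would choose a sequence $t_n\to\lambda$ and components $X_{t_n}^{j_n}$ with $\inf_q\|q\|\to\infty$, and consider the Hausdorff limit (along a subsequence, by compactness of the hyperspace of compact subsets of $\bP^2$) of their closures in $\bP^2$; this limit is a connected compact subset of $L^{\ity}$ contained in the finite discrete set $\sigma(\bX^{\ity})=\{p_1,\ldots,p_k\}$, hence a single point $p$, and this yields a (V$_{(p,\lambda)}$) configuration in the local chart at $p$. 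Part~(b) runs in parallel: the intersection points $q_t\in\im\gamma_t\cap M(f)$ in (S$_\lambda$) escape to infinity and accumulate subsequentially at a single $p\in L^{\ity}$, and in the chart at $p$ the path $\gamma_t$ appears as a curve $C_t^i$ reaching $\partial D_\e$ (through the far piece of its disconnected limit set) with $\dist(C_t^i,p)\to 0$; the converse is the reverse translation combined with the critical-point construction of (c) below to produce the required intersection $\im\gamma_t\cap M(f)$.

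For the inclusion $\cA^{\ity}\subset\cM$ in part~(c), the key computation is that in the chart $(x,z)$ at $p=[0:1:0]$, the pullback of $\rho(X,Y)=X^2+Y^2$ via $(X,Y)=(x/z,1/z)$ equals $\phi(x,z):=(1+x^2)/z^2$, so critical points of $\phi|_{C_t^i}$ in the chart correspond exactly, via the chart map, to points of $M(f)\cap X_t^j$ in the affine picture. In the vanishing case (V$_{(p,\lambda)}$) the curve $C_t^i$ lies strictly inside $D_\e$ (since $C_t^i\cap\partial D_\e=\emptyset$), so $\phi|_{C_t^i}$ attains its maximum at an interior critical point $q_t\in M(f)$; since $C_t^i\to\{p\}$ as $t\to\lambda$, one has $(q_t,t)\to(p,\lambda)$ in $\bX$ and therefore $(p,\lambda)\in\overline{M(f)}$. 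The splitting case (S$_{(p,\lambda)}$) is handled by the same strategy once one arranges that the maximum of $\phi|_{C_t^i}$ is attained interior to $D_\e$.

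The main technical obstacle is this last step in the splitting case: since $\phi=(1+x^2)/z^2$ can become large on $\partial D_\e$ at points where $|z|$ is small, the maximum of $\phi|_{C_t^i}$ might a priori always lie on $\partial D_\e$ with its $z$-coordinate tending to zero, preventing a direct conclusion. I plan to resolve this either by replacing $D_\e$ by a Puiseux-adapted sectorial neighbourhood of $p$ whose boundary stays away from the low-$z$ regime, or by applying the Curve Selection Lemma to the semi-algebraic set $\{(q,t)\in\bX:q\in M(f),\ f(q)=t\}$ to select an analytic arc converging to $(p,\lambda)$ directly. A minor secondary point, arising in parts (a) and (b), is upgrading the subsequential convergence $t_n\to\lambda$ to the one-sided continuous families required by Definitions~\ref{d:split-van} and~\ref{d:vanish}; this should follow from semi-algebraic triviality of the fibration $f$ on small half-intervals of $t$ away from bifurcation values.
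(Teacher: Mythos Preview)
Your approach is essentially the paper's: both pin down the unique $p$ by arguing that the limit of a connected family inside the finite set $\sigma(\bX^\infty)$ must be a single point, and both produce the Milnor-set witness for part~(c) by taking a critical point of $\rho$ (equivalently of your pullback $\phi$) on the relevant piece of the fibre. The paper stays in the affine picture and simply notes that the vanishing component $X_t^j$ has a local minimum for $\rho$, which lies in $M(f)$ and tends to $(p,\lambda)$; for the splitting direction it starts on the (S$_\lambda$) side, where the definition already hands you points $q_t\in\im\gamma_t\cap M(f)$ with $\|q_t\|\to\infty$, and follows them to their limit in $\cM$.

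Two small corrections to your version. In the vanishing case you should take the \emph{minimum} of $\phi$ on $C_t^i$, not the maximum: the definition of (V$_{(p,\lambda)}$) allows $p\in C_t^i$, and then $\phi$ blows up at $p$ and has no maximum, whereas the minimum always exists and is an interior critical point (this is exactly the paper's ``local minimum for $\rho$''). In the splitting case your worry about the boundary is unfounded and the heavier tools you propose are unnecessary: for $\e$ small the only root of $f_d(x,1)$ in $D_\e$ is $x=0$, so $C_t\cap\{z=0\}\cap\overline{D_\e}\subset\{p\}$, and since $p\notin\partial D_\e$ the points of $C_t\cap\partial D_\e$ have $|z|$ bounded below uniformly as $t\to\lambda$ (their limits lie in the finite set $C_\lambda\cap\partial D_\e$, disjoint from $\{z=0\}$). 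Thus $\phi$ is bounded on $C_t^i\cap\partial D_\e$ while $\phi\to\infty$ at the near-$p$ point of $C_t^i$, so the maximum of $\phi|_{C_t^i}$ is interior and gives the desired point of $M(f)$ converging to $(p,\lambda)$.
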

\begin{proof} We fix a large disk $D_{R}$ satisfying Lemma \ref{l:disk} and use the restriction $F = f_{|\bR^{2}\m \overline{D_R}}$, its fibres  $X_t =F^{-1}(t)$, their connected components $X_t^{j}$ and their closures $\overline{X_t^{j}}\subset \bX$.

\noindent (a). (V$_{(p,\lambda)}$) $\implies$ (V$_{\lambda}$)  follows directly from the Definitions \ref{d:split-van} and \ref{d:vanish}. Reciprocally, in case we have (V$_{\lambda}$), then  there exists a vanishing connected component at infinity $X_t^{j}$, and,  for $t\to\lambda$ and close enough to $\lambda$,  the limit set $\lim_{t\to \lambda}\overline{X_t^{j}}$ in $\bX$ is precisely one point. Indeed, note first that this limit is included in $\overline{F^{-1}(\lambda)} \cap L^{\ity}$ which is a finite set of points.  But if  that limit is not a single point, then we get  a contradiction  since the limit of a vanishing connected component must be connected too. 

So this limit point is of type  $(p,\lambda)\in L^{\infty}\times\bR$, and it is necessarily in $\overline{M(f)} \subset \bX$ since the component $X_t^{j}$ has a local minimum for the Euclidean distance function $\rho$, compare to Definition \ref{d:vanish}. Therefore, for $t$ close enough to $\lambda$,  the component $\overline{X_t^{j}}$ is a loop vanishing at $(p,\lambda)$.

\noindent (b).
If (S$_{(p,\lambda)}$) then $f$ has a splitting at infinity at $\lambda$, which follows again directly from the above Definitions \ref{d:split-van} and \ref{d:vanish}. Reciprocally, for a splitting component at infinity $X_t^{j}$, the 
point $\max_{q\in X_t^{j}\cap M(f)}\Vert q \Vert$
 is by definition in the Milnor set  $M(f)$ and therefore its limit for $t\to\lambda$ is 
a  point $(p,\lambda)\in L^{\infty}\times\bR$ at which $\overline{X_t^{j}}$ has a local representative which splits in the sense 
of Definition \ref{d:split-van}. 

\noindent (c).  That  $\cA^{\ity} \subset \cM$ follows directly from the proofs of (a) and (b). The finiteness of $\cA^{\ity}$ follows since the Milnor set  $M(f)$ is an affine curve, hence $\cM$ is a finite set.

This finishes the proof of our Theorem  \ref{t:main2}.
\end{proof}

We end  \S \ref{ss:proofmain} by remarking that all the results proved in this subsection concern only the behaviour ``at infinity'' of $f$ and therefore they are independent of the fact that we replace $f$ by some restriction $F = f_{|\bR^{2}\m \overline{D_R}}$.    



\section{Atypical points at infinity: selecting the points of $\cM$}

Theorem \ref{t:atyppoint}(c) tells that if $(p,\lambda)$ is an atypical point at infinity of $f$, then $(p,\lambda)$ belongs in the boundary at infinity $\cM = \overline{M(f)} \cap L^{\infty}\times \bR$ of the Milnor set $M(f)$.     Our first algorithm consists of determining the finite set $\cM$, see \S \ref{ss:M}. 
  
However the converse of the inclusion $\cA^{\ity} \subset \cM$ is not true: one can have polar branches at infinity without  vanishing or splitting, as shown  by an example  explained in \cite{TZ}, see next Example \ref{ex:king}.

\medskip
  
The remaining test: ``\emph{which point of the set $\cM$ is atypical and which is not}'' turns out to be delicate. We shall proceed in several steps.

The topological triviality at infinity of the function $f$ over a small enough interval $I\subset \bR$ which does not contain singular values of $f$ is well-understood,  see e.g. \cite{TZ, JoT},  \cite[\S 2]{Ti-book}.
This is however not enough for us here.  We need the localisation of this result at infinity, as follows. %

\subsection{Choice of the disk $D_{\e}$}\label{ss:disktheory}

Unless otherwise specified, we work here with a fibre  $f^{-1}(\lambda)$ having a compact singular set, and satisfying Lemma \ref{l:disk}.
Let us consider some point $(p,\lambda) \in \cM$ and a local chart $U = \bR^{2}$ of $\bP^2$ at $p$. We want to fix a ``box'' $D\times ]\lambda - \delta, \lambda+\delta[  \subset \bR^{2}\times \bR$ around $p$ where to apply some meaningful tests (which we shall define later) of local vanishing and splitting.  We start by determining the size of the box and insure the independency on choices.

\begin{theorem}\label{t:localfibration}
 Let  $\lambda \in \bR$, let $(p,\lambda) \in \cM$ and let $\bR^{2}$  be an affine chart of $\bP^{2}$ containing $p\in L^{\ity}$.  
There exists $\e_{0}>0$ such that  for any $0<\e <\e_{0}$ the map:  
\begin{equation}\label{eq:projfib}
  \tau_{|} : \bX \cap D_{\e}\times ]\lambda - \delta, \lambda[ \cup  ]\lambda, \lambda+\delta[   \longrightarrow   
]\lambda - \delta, \lambda[ \cup  ]\lambda, \lambda+\delta[
\end{equation}
  is a locally trivial fibration for any small enough $\delta$ depending on $\e$.
\end{theorem}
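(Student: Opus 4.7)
The plan is to work in the local chart $(x,z)\in\bR^2$ at $p=(0,0)\in L^{\infty}$, in which $L^{\infty}$ is given by $\{z=0\}$ and $\bX$ is the real algebraic surface $\{G=0\}\subset\bR^3$ with $G(x,z,t)=g_t(x,z)$ and $\tau(x,z,t)=t$. I will show that for suitably chosen $\e$ and $\delta=\delta(\e)$ small, the restriction of $\tau$ to the compact semi-algebraic set
\[ Y := \bX\cap\bigl(\overline{D_{\e}}\times[\lambda-\delta,\lambda+\delta]\bigr), \]
equipped with an appropriate Whitney stratification, is a proper stratwise submersion over the punctured interval $]\lambda-\delta,\lambda[\cup\,]\lambda,\lambda+\delta[$. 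Thom's first isotopy theorem will then furnish local triviality, and restriction to open $D_{\e}$ yields the stated fibration.

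To choose $\e_0$ I will use two inputs. Firstly, $\sigma(\bX^{\infty}) = \{[x:y:0] : f_d(x,y)=0\}$ is a finite subset of $L^{\infty}$, so there is $\e_0'>0$ such that $(0,0)$ is the only $L^{\infty}$-point of $\bX$ in $\overline{D_{\e_0'}}$. Secondly, by the conic-structure lemma for real-analytic germs, established via Lojasiewicz inequalities applied to $g_\lambda$ at $(0,0)$, after possibly shrinking there is $\e_0\in(0,\e_0']$ such that $C_\lambda$ is transverse to every sphere $\partial D_{\e}$ with $0<\e\leq\e_0$. Fixing such an $\e$, the tangency set $T_{\e}:=\{t\in\bR : C_t\not\pitchfork\partial D_{\e}\}$ is closed and semi-algebraic by Tarski--Seidenberg, and does not contain $\lambda$; hence there is $\delta_1>0$ with $T_{\e}\cap\,]\lambda-\delta_1,\lambda+\delta_1[\,=\emptyset$. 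This disposes of every possible tangency of the fibres $C_t$ with the boundary sphere.

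For interior critical points, note that at any $(x,z,t)\in\bX$ with $z\neq 0$ the surface $\bX$ is smooth (since $\partial G/\partial t=-z^d\neq 0$), and the condition that $\tau|_\bX$ fails to be a submersion reduces to $\partial_x g_t=\partial_z g_t=0$; using the factorisation $g_t(x,z)=z^d(f(u,v)-t)$ in the affine coordinates $(u,v)=(x/z,1/z)$, this becomes exactly $\nabla f(u,v)=0$. Hence the corresponding critical values of $\tau$ lie in $f(\Sing f)$, a semi-algebraic subset of $\bR$ of Lebesgue measure zero by Sard and therefore finite; I choose $\delta_2>0$ with $]\lambda-\delta_2,\lambda+\delta_2[\,\cap f(\Sing f)\subset\{\lambda\}$. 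Points of $\bX$ with $z=0$ inside $D_{\e}$ all sit on the single vertical line $\{(0,0)\}\times\bR$: if $\bX$ is smooth along it, then $\tau$ is automatically a submersion there (as $\partial/\partial t\in T\bX$ whenever $\partial G/\partial t=0$); if $\bX$ is singular along it, the line is taken as a one-dimensional stratum on which $\tau$ is the identity. Setting $\delta=\min(\delta_1,\delta_2)$, I invoke the existence of Whitney stratifications for compact semi-algebraic sets compatible with any prescribed finite family of semi-algebraic subsets, and stratify $Y$ compatibly with $\partial D_{\e}\times\bR$, with $\{(0,0)\}\times\bR$, and with $\tau^{-1}(\lambda)$. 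The analysis above shows that every stratum of $Y$ lying over the punctured interval maps submersively to $\bR$, and $\tau|_Y$ is proper by compactness; Thom's first isotopy theorem delivers the required local triviality. The step I expect to be the main obstacle is this interior analysis: translating the smooth critical locus of $\tau$ on $\bX$ into the Sard-type finiteness of $f(\Sing f)$ is the only point at which a genuinely global property of $f$ is invoked, as opposed to merely local behaviour near $(p,\lambda)$.
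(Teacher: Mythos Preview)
Your approach is essentially correct but follows a different route from the paper. The paper does not use Whitney stratifications or Thom's isotopy lemma; instead it proves an auxiliary result (Lemma~\ref{p:ro-inf}) which, for each fixed $t\neq\lambda$ close to $\lambda$, constructs an explicit trivialisation of $\tau$ over a small interval around $t$. The key device is a \emph{$\rho$-controlled} vector field, where $\rho$ is the original Euclidean distance on the affine $\bR^{2}$ (not the chart distance): since $(p,t)\notin\cM$ for such $t$, the Milnor set $M(f)$ intersected with the tube is confined to a large affine disk $D_R$; one lifts $\partial/\partial t$ tangentially to the levels $\{\rho=r\}$ outside $D_R$, extends this continuously across the point $p$, applies Ehresmann on the compact piece inside $\overline{D_R}$, and glues along $\partial D_R$. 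Your argument bypasses $M(f)$ entirely, which is conceptually cleaner; the paper's construction, by contrast, is tailored to the effective algorithms of \S\ref{s:algo}, where $M(f)$ is the computational backbone.

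There is one small gap in your argument. When you invoke an abstract Whitney stratification of $Y$ compatible with the three prescribed subsets, you assert that every stratum over the punctured interval maps submersively to $\bR$; but you have only checked this for the open $2$-stratum, the boundary curve, and the axis $\{(0,0)\}\times\bR$ taken as a whole. A compatible Whitney stratification may be forced to contain $0$-dimensional strata on the axis at certain values $t_0\neq\lambda$---precisely those where Whitney's condition (b) fails for the pair (smooth locus, axis)---and a $0$-dimensional stratum cannot submerse onto $\bR$. The fix is immediate: that Whitney-bad locus is a proper closed semi-algebraic subset of a line, hence finite; add a third shrinking $\delta_3$ to keep those finitely many values out of the punctured interval, or simply observe that local triviality allows you to localise around each $t$ away from them.
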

  \begin{proof}
 At each fixed point $(p,\lambda) \in \cM$ there exists  $\e_{0}>0$ such that the circles $\partial D_{\e}$ are transversal to $X_{\lambda} := \tau^{-1}(\lambda)$, and in particular $X_{\lambda}\cap D_{\e}$ is non-singular, for any $0< \e \le \e_{0}$.  This is a well-known  application of  the Curve Selection Lemma\footnote{See e.g. \cite{Mi}.} to the local distance function in the chart at infinity $U = \bR^{2}$ where $p$ is the origin $(0,0)$. 
 
 From the fact that $\overline{M(f)}$ is an analytic set and that $\overline{M(f)} \cap \overline{X_\lambda} \cap D_{\e} = \{p\}$, it follows that for any  fixed positive $\e<\e_{0}$,   there exists $\delta= \delta(\e)$ such that
$\overline{M(f)} \cap \bX \cap (\partial D_{\e}\times ]\lambda - \delta, \lambda+\delta[) =\emptyset$, which is again an application of the Curve Selection Lemma.

The choices of $\e_{0}$ and of $\delta(\e)$ are effective, and an algorithmic procedure shall be given in \S \ref{ss:disk}.
 

%

\begin{lemma}\label{p:ro-inf}
Let   $t\not \in f(\Sing f)$,   $(p,t)\not \in \cM$ and let $U=\bR^{2}$  be a chart of $\bP^{2}$ with  $p= (0,0)$.  
Let  $D\subset \bR^{2}$ be some disk at $p$ in this chart, such that the fibre $X_{t} := \tau^{-1}(t)$ is transversal to the  boundary $\partial D$.  There exists  $\eta >0$ such that:
 \begin{enumerate}
\rm  \item \it  $M(f) \cap \bX \cap D\times ]t - \eta, t +\eta[  \subset D_{R}$, for any affine disk  $D_{R} := \{\rho(x,y) = x^{2}+ y^{2}<R\} \subset \bR^{2}$ of large enough radius $R\gg 1$.
\rm  \item \it   the projection
\[ \tau_{|} : \bX \cap D\times ]t - \eta, t +\eta[   \longrightarrow   ]t - \eta, t +\eta[ \]
is a topologically trivial fibration.
\end{enumerate}
 \end{lemma}
\begin{proof} 
The transversality of the circle $\partial D$ to the curve fibre $X_{t}$ implies the existence of a  small enough $\eta >0$ such that $\partial D$ is still transversal to $X_{t'}$ for any $t'\in ]t - \eta, t +\eta[$. 
 The assumption $(p,t)\not \in \cM$ implies the existence of a large enough radius $R$ such that 
the complement of the affine disk $D_{R}$ does not intersect $M(f) \cap \overline  T_{\eta}$, where  $\overline  T_{\eta} :=\bX\cap \overline  D\times ]t - \eta, t +\eta[$ is what we call ``tube'' at $(p,t)$. \\ 
This proves (a).

\smallskip

\noindent 
(b). We fix $\eta >0$ like in (a). Since the Milnor set $M(f)$ does not intersect $\overline T_{\eta}\m D_{R}$, one may construct a \emph{$\rho$-controlled trivialisation}\footnote{As in \cite[Definition 3.1.8]{Ti-book}} on $\overline T^{*}_{\eta}\m D_{R}$, where $\overline T^{*}_{\eta} := \overline T^{*}_{\eta} \m (\{p\} \times ]t - \eta, t +\eta[)$.
This means a vector field tangent to the levels $\{\rho =r \}$ for $r\ge R$ which lifts $\partial/\partial s$ by $\tau_{|}$. 
The existence of such a lift of $\partial/\partial s$ is due to the transversality of the spheres $\{\rho =r \}$ to $X_{s}$ for all $r\ge R$ and all  $s \in ]t - \eta, t +\eta[$, conditions which are insured here by (a).
This vector field  produces a trivialisation $F: \overline T^{*}_\eta\m D_{R} \to  Y^{*}_t\times[t - \eta, t +\eta]$, where $Y^{*}_t := X_{t}\cap \overline T^{*}_\eta\m D_{R}$, and thus
the restriction $\tau_|: \overline  T^{*}_\eta\m D_{R} \to [t - \eta, t +\eta]$ becomes a trivial C$^{\ity}$ fibration.

This trivialisation has a unique extension to a C$^{0}$ trivialisation $\overline F: \overline T_\eta\m D_{R} \to Y_t \times[t - \eta, t +\eta]$ where $\overline F$ is  the identity on 
$\{p\}\times[t - \eta, t +\eta]$.

We thus get a topologically trivial fibration:
\begin{equation}\label{eq:fib1}
 \tau_{|} : T_{\eta}\m D_{R} \to ]t - \eta, t +\eta[.
\end{equation}

 We now have to extend this trivial fibration to the whole tube $T_{\eta}$. On the compact manifold with boundary $T_{\eta} \cap \overline{D_{R}}$, the map $\tau_{|}$ is a submersion and it is transversal to the boundary $T_{\eta}\cap \partial D_{R}$ by our transversality assumptions.
 We may then apply the Ehresmann fibration theorem and get a topologically trivial fibration:
\begin{equation}\label{eq:fib2}
 \tau_{|} : T_{\eta}\cap \overline{D_{R}} \to ]\lambda - \eta, \lambda +\eta[.
\end{equation}
By construction,  the two fibrations \eqref{eq:fib1} and \eqref{eq:fib2} glue together along their common boundary $T_{\eta}\cap \partial D_{R}$.	
\end{proof}

Applying now Lemma \ref{p:ro-inf} to $D = D_{\e}$
  and $t\in ]\lambda - \delta, \lambda[ \cup  ]\lambda, \lambda+\delta[$ shows that
\eqref{eq:projfib} is a locally trivial fibration.

Theorem \ref{t:localfibration} is proved.
\end{proof}

On the practical side, Theorem \ref{t:localfibration} tells of course that the map \eqref{eq:projfib} is topologically trivial on each interval  $]\lambda - \delta, \lambda[$ and   $]\lambda, \lambda+\delta[$, and moreover we have:

\begin{corollary}\label{c:splitvan}
Let  $(p,\lambda) \in \cM$ and let $U = \bR^{2}$ be an affine chart  of $\bP^{2}$ having $p$ as the origin.  
Let $\e_{0}>0$ such that the circles $\partial D_{\e}$ are transversal to $X_{\lambda}$ for any $0< \e \le \e_{0}$. 
Let  $0< \e \le \e_{0}$ and $\delta(\e)>0$ such that Theorem \ref{t:localfibration} holds.
 Then:  
\begin{enumerate}
\rm \item \it $X_{t_{0}}\cap  D_{\e}$ has a loop at 0 if and only if $X_{t}\cap  D_{\e}$ has a loop at 0,  for any $t$ in the same interval as $t_{0}$. In this case the condition $(V_{(p,\lambda)})$ holds.
\rm \item \it    $X_{t_{0}}\cap  D_{\e}$ has a connected component  not containing $p$ if and only if $X_{t}\cap  D_{\e}$ has a connected component  not containing $p$, for any $t$ in the same interval as $t_{0}$. 
 In this case the condition  $(S_{(p,\lambda)})$ holds.
 \end{enumerate}
\fin
\end{corollary}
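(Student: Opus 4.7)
\emph{Plan.} The proof splits into a trivialization argument for the ``iff'' equivalences within each interval and a Hausdorff-limit argument for the implications about $(V_{(p,\lambda)})$ and $(S_{(p,\lambda)})$.

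\emph{First, extend the locally trivial fibration of Theorem \ref{t:localfibration} to the closed disk.} The choice of $\delta(\e)$ from the proof of that theorem guarantees that $\overline{M(f)}\cap\partial D_\e\times\,]\lambda-\delta,\lambda+\delta[\,=\emptyset$, so $\partial D_\e$ meets every fibre $X_t$, for $t\in\,]\lambda-\delta,\lambda+\delta[\,$, transversally. Ehresmann's theorem, as used in Lemma \ref{p:ro-inf}(b), then upgrades the open-disk fibration to a diffeomorphism of pairs $(X_{t_0}\cap\overline{D_\e},\,X_{t_0}\cap\partial D_\e)\cong(X_t\cap\overline{D_\e},\,X_t\cap\partial D_\e)$ for any $t,t_0$ in the same sub-interval $]\lambda-\delta,\lambda[$ or $]\lambda,\lambda+\delta[$. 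Since $g_t(0,0)=f_d(0,1)$ is independent of $t$, the origin $p$ lies in every fibre $X_t$, and this diffeomorphism can be arranged to fix $p$. The property ``$X_t\cap D_\e$ has a compact connected component disjoint from $\partial D_\e$'' and the property ``$X_t\cap D_\e$ has a connected component whose closure avoids $p$'' are both invariant under such a diffeomorphism; this yields the ``iff'' statements in (a) and (b).

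\emph{Next, record the local structure needed for the Hausdorff-limit analysis.} For $\e$ small enough, the Curve Selection Lemma and the transversality of $\partial D_\e$ to $X_\lambda$ show that $X_\lambda\cap\overline{D_\e}$ is a finite union of analytic arcs (its ``conical structure''), each going from $p$ to a single transverse intersection with $\partial D_\e$. Every point $(q,\lambda)$ with $q\in\overline{D_\e}$ and $q\ne p$ lies outside $\cM$ (since $\cM$ is finite by Theorem \ref{t:atyppoint}(c) and $\Sing f^{-1}(\lambda)$ is compact, hence far from $p$ in the chart), so $\tau$ is a submersion at $(q,\lambda)$ and the implicit function theorem provides a local trivial fibration of $\tau$ across $t=\lambda$ at such points.

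\emph{Finally, prove the ``in this case'' implications.} For (a), suppose a loop $C_t^i\subset X_t\cap D_\e$ disjoint from $\partial D_\e$ exists for every $t\in\,]\lambda-\delta,\lambda[$ (the right-hand interval is analogous). Its Hausdorff limit $L$ is a compact subset of $X_\lambda\cap\overline{D_\e}$. If $L$ contained some $q\ne p$, the local trivial fibration near $(q,\lambda)$ would produce a family of analytic arcs $\beta_t\subset X_t$ converging to the branch of $X_\lambda$ through $q$; by connectedness of $C_t^i$ we would have $\beta_t\subset C_t^i$, and since that branch reaches $\partial D_\e$ transversally, $\beta_t$ too would meet $\partial D_\e$ for $t$ close enough to $\lambda$, contradicting $C_t^i\cap\partial D_\e=\emptyset$. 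Hence $L=\{p\}$, which is exactly $(V_{(p,\lambda)})$. For (b), a component $C_t^i$ whose closure meets $\partial D_\e$ but not $p$ satisfies $\dist(C_t^i,p)>0$, because $p$ belongs to a distinct component of $X_t\cap D_\e$ (on which $X_t$ is locally smooth through $p$ for $t$ close to but distinct from $\lambda$). Applying the same branch-propagation argument to a boundary point in $L\cap\partial D_\e$ forces $L$ to contain the full branch of $X_\lambda$ emanating from that point, hence also its other endpoint $p$, so $\dist(C_t^i,p)\to 0$ and $(S_{(p,\lambda)})$ follows.

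\emph{Main obstacle.} The delicate step is the Hausdorff-limit control in the last paragraph: one has to rule out the possibility that the loop (or the splitting component) degenerates to a proper compact sub-arc of a branch of $X_\lambda$ that stops short of both $p$ and $\partial D_\e$. The mechanism that forbids this is the local trivial fibration of $\tau$ along $\bX\setminus\{(p,\lambda)\}$, which propagates any putative interior limit point all the way along its analytic branch until the boundary condition (loop disjoint from $\partial D_\e$, resp.\ component touching $\partial D_\e$) produces a contradiction, and this interplay between the compactness of the component and the conical geometry of $X_\lambda$ at $p$ is what makes the argument work.
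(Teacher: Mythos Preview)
The paper gives no proof of this corollary: it is stated with a \fin\ immediately after the remark that Theorem~\ref{t:localfibration} makes the map \eqref{eq:projfib} topologically trivial over each of the two sub-intervals. The authors thus regard both the ``iff'' parts and the ``in this case'' clauses as direct consequences of that triviality together with Definition~\ref{d:split-van}.

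Your argument is correct and supplies exactly the content the paper leaves implicit. The ``iff'' equivalences indeed follow from the triviality of \eqref{eq:projfib} and from the fact (built into the proof of Lemma~\ref{p:ro-inf}(b)) that the trivialisation fixes $p$; this matches what the paper has in mind. The part that genuinely needs an argument is the passage from ``loop at $0$ for all nearby $t$'' to $\lim_{t\to\lambda}C_t^i=\{p\}$ (resp.\ from ``component missing $p$'' to $\dist(C_t^i,p)\to 0$), and your Hausdorff-limit plus branch-propagation reasoning is the right mechanism: the conical structure of $X_\lambda\cap\overline{D_\e}$ (arcs from $p$ to $\partial D_\e$) and the fact that $\tau$ is a submersion everywhere on $\overline{D_\e}\times\{\lambda\}$ except at $(p,\lambda)$ force any limit point $q\neq p$ to drag $C_t^i$ out to $\partial D_\e$, yielding the desired contradictions. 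One minor point: in (b) you tacitly treat only components whose closure meets $\partial D_\e$; a compact component not through $p$ and disjoint from $\partial D_\e$ would, by your own argument for (a), give $(V_{(p,\lambda)})$ rather than $(S_{(p,\lambda)})$, but this does not affect the conclusion that $(p,\lambda)\in\cA^\infty$.
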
 
 
\subsection{Relation to King's notion of ``coalescing''}
The local topological triviality problem, a long-standing one, has been studied in different contexts and generality, see e.g. King's work \cite{Ki, Ki0}. Let us see what are the relations to our problem.

We are concerned here with a family of functions germs $G:(\bR^2\times I,  (0,0)\times I) \to (\bR,0)$, where $ G(y,z,t)=g_t(y,z)$ and $t\in I  := ] \lambda - \delta,  \lambda + \delta[$. We denote by $\Sing_{y,z} G$ the germ of the singular locus of $G$ as function of $x$ and $z$ and depending on the parameter $t$.  King \cite{Ki} defined the following notion associated to such a family of germs of continuous functions: 
		
\begin{definition}\label{d:coal} 
One says that the family $G$ has \emph{no coalescing} of critical points at the point $(0,0) \times \{\lambda\}$  if  $(0,0) \times \{\lambda\} \not \in \overline{\Sing_{y,z} G \m  ((0,0)\times I)}$.
		\end{definition}
		
	In \cite[Corollary 1]{Ki} it is proved that if the family $g_{t}=0$ has the same topological type for $t\in I$ for some small  $\delta$ and has no coalescing then there is a continuous family of homeomorphism germs $h_t:(\bR^2, 0) \to (\bR^2,0)$, for $t$ close enough to $\lambda$,  such  that $g_{\lambda}=g_t\circ h_t$. Hence King's reasoning starts once our problem is solved.   
  Moreover, the coalescing  can happen at some point $(p,\lambda)$ even if $\lambda$ is a typical value, and moreover, even if $(p,\lambda)\not\in \cM$.  Let us show this by the following two examples.  
		
\begin{example}\label{ex:King}(extracted from \cite{TZ})\label{ex:king}

 Consider $f:\bR^2 \to\bR$, $f(x,y) = 2x^2y^3-9xy^2+12y$. The point $p= [1:0:0]$ is the origin of the chart where we have  the family of functions induced by $f$: 
\[G(y,z,t)=g_{t}(y,z)= 2y^3-9y^2z^2 +12yz^4 -tz^5.  \] 
By the results of \cite{TZ}, there is no vanishing and no splitting at the fibre $f^{-1}(0)$, thus by our  Theorem \ref{t:atyppoint} or by  \cite{TZ, JoT}, this fibre is  not atypical.

On the other hand we have: 
			\[\frac{\partial G}{\partial y}(y,z,t) = 6y^2 -18yz^2 +12z^4,\ \
			\frac{\partial G}{\partial z}(y,z,t) = -18y^2z +48yz^3-5tz^4,\] 			
and the curve $\gamma(s)=(\frac{s^2}{36}, \frac{s}{6}, s)$ verifies  $\frac{\partial G}{\partial y}(\gamma(s))\equiv 0$,   $\frac{\partial G}{\partial z}(\gamma(s))\equiv 0$, thus $\im \gamma \subset \Sing_{y,z} G$.   Since  $G(\gamma(s))=\frac{-s^6}{6^6} \to 0$ for $s\to 0$, the family $G$ has coalescing at $(0,0)$. 
\end{example}		
\begin{example}[extracted from {\cite{DT}}]\label{ex:2} \

Let $f(x,y)= y(x^2y^2+3xy+3)$. We have $f(\Sing f)=\emptyset$,  $\cM= \emptyset$ and therefore $\cB_{f}=\emptyset$. Let $p= [1:0:0]$ be the origin of the chart $U$ of variables $(y, z)$. The local  family of functions induced by $f$ is: 
	\[G(y,z,t)=g_{t}(y,z)= y^3+3y^2z^2 +3yz^4 -tz^5.  \] 
Since  $\cM= \emptyset$, it follows from Lemma \ref{p:ro-inf} that the family of curve germs $ \{g_t=0\}$ is  topologically trivial at $(p,0)$. 
	
	On the other hand we have: 
	\[\frac{\partial G}{\partial y}(y,z,t) = 3y^2 +6yz^2 +3z^4,\ \ 
	\frac{\partial G}{\partial z}(y,z,t) = 6y^2z + 12 yz^3-5tz^4,\] 
and the curve $\gamma(s)=(-s^2, -s,\frac{6s}{5})$ verifies  $\frac{\partial G}{\partial y}(\gamma(s))\equiv 0$,   $\frac{\partial G}{\partial z}(\gamma(s))\equiv 0$. We have $\im \gamma \subset \Sing_{y,z} G$, and  $G(\gamma(s))=\frac{s^6}{5} \to 0$ for $s\to 0$ shows that the family  $G$ has coalescing at $(0,0)$. 

\end{example}

\bigskip



\section{Effective tests. The algorithms}\label{s:algo}
Here we describe effective algorithmic procedures which do the following:

\medskip

\noindent
\S \ref{ss:M}.  Find a finite set  of points $\cB$ containing  the  set $\cM$ of ``Milnor points at infinity'', where $\# \cB \le d = \deg f$.

\medskip
\noindent
\S \ref{ss:disk}.  For each such point $(p,\lambda)\in \cB$, find an effective tube neighbourhood $D_{\e}\times ]-\delta+\lambda, \delta + \lambda[$ used in Theorem \ref{t:localfibration}, and

\medskip
\noindent
\S \ref{ss:algovanish}.  Test the existence of vanishing or of splitting at some point $(p,\lambda)\in \cB$.
\\



\medskip

\subsection{Algorithm for estimating  $\cM$ by a finite set}\label{ss:M}

As before, we assume that $f$ is primitive, see after Definition \ref{d:milnor}. In order to find the finite set $\cM = \overline{M(f)} \cap L^{\infty}\times \bR$, we first determine the intersection of $\overline{M(f)}\subset \bP^{2}$ with the line at infinity $L^{\infty}$. This is easy: homogenise with the variable $z$ the equation $y\frac{\partial f}{\partial x}(x,y)-x\frac{\partial f}{\partial y}(x,y) =0$ of degree $d$, and then take $z=0$.  The number of real solutions $\overline{M(f)} \cap L^{\infty}$ will be at most $d$, by Bezout.

Next, for each point $p\in \overline{M(f)} \cap L^{\infty}$ we  determine the finite subset $\tau(\overline{M(f)}\cap\{p\}\times\bR)\subset \bR$. This is defined as follows:
\[ S_{p}(f)=\{t\in\bR\mid \exists\{(x_j,y_j)\}_{j\in\bN}\subset M(f), [x_j:y_j:1]\to p,\ f(x_j,y_j)\to t\}. \]

\medskip

We may assume (modulo a linear change of variables) that $p=[0:1:0]\in L^{\ity}$,  and we work in the chart $\{y\neq 0\}$ with coordinates $(x,z)$. If we set $\hat f(x,z) :=\tilde f(x,1,z)$ then we have the equality
$\lim_{ [x:y:1]\to p}f(x,y)=\lim_{z\to 0}\frac{\hat f(x,z)}{z^d}$. Suppose that $\hat h(x,z)=0$ is the equation that defines $\overline{M(f)}$ around $p$.
Finding $S_{p}(f)$ is equivalent to the following problem:

(*)  find all limits $\lim \frac{\hat f(x,z)}{z^d}$ for $(x,z)\to 0$ and $\hat h(x,z)=0$.

\medskip
\noindent
We consider $\hat h(x,z)$ and $\hat f(x,z)$ as holomorphic germs at the origin $(0,0)$ of the chart centred at $p$.
We pass to complex variables, and we find all the Puiseux roots of $\hat h(x,z)=0$.  They are finitely many, their number is not more than $d = \deg f$, not counting the conjugates.  The Newton-Puiseux algorithm exhibits indeed all the roots by starting 
from each edge of the Newton polygon and following all the possible choices of constants at each step.

We shall give  below an upper  bound for the number of steps and show that we can work with the truncated roots
in the limits problem (*). We might then select only the real limits, but still we might get more limit points than the set   $S_{p}(f)$ of real limits. Nevertheless our procedure yields a set $\cB\supset \cM$ of points 
 $(p,\lambda)\in \overline{M(f)} \cap L^{\infty}$ with  $\# \cB \le d$.

A Puiseux parametrisation of some root of $\hat h(x,z)=0$ looks like $z=T^n$, $x=\sum_{j\geq 1} \lambda_j T^j$, and 
 the series may be infinite even if $\hat h(x,z)$ is a polynomial.
However we only need  the first $dn$ terms of it, they are enough to compute the limit in (*) since we have the equality:
\begin{equation}\label{eq:lim}
 \lim_{T\to 0} \frac {\hat f(\sum_{j\geq 1} \lambda_j T^j, T^n)}{T^{dn}}=
\lim_{T\to 0}\frac{\hat f(\sum_{1\leq j\leq dn} \lambda_j T^j, T^n)}{T^{dn}}.
\end{equation}

In order to reach the exact value of the limit, we need to find the value of $n$, hence to give a bound for the number of steps in the Newton-Puiseux process as it is described e.g. in  \cite{Wa}. Let  $\hat h(x,z)$ be general in 
$x$ of order $k>0$ (i.e. $k$ is the lowest point on the $x$-axis 
in the Newton polygon of $\hat h$).
This iterative procedure gives the parametrisation $z=T^n,\ x=\sum_{i\geq 0}\lambda_iT^{m_i}$  
by producing step-by-step the constants  $\lambda_i$, the rational numbers $\frac{m_i}n$, and also the positive integers integers $k_i$, $p_i$, $q_i$ 
 such that $k_0=k$, $k_i\geq k_{i+1}$, where $p_i$ and $q_i$ are relatively prime, defined as follows.
Starting with $\hat h(x,z)=\sum c_{r,s}z^rx^s$,  at the step $i$ we get $\hat h^{(i)}(x,z)=\sum c^{(i)}_{r,s}z^rx^s$ and we consider the Newton polygon of $\hat h^{(i)}$ in the $(r,s)$-plane. 
The integer $k_i$ is defined as its lowest point on the $s$-axis. We then choose an edge of the Newton polygon.
Suppose that $(r_1,s_1)$ and $(r_2,s_2)$ are the two points at the end of this edge with  $r_1 < r_2$, $s_1 > s_2$. Then $p_i$ and $q_i$ are the unique 
relatively prime positive integers that satisfy $q_i(r_2-r_1)=p_i(s_1-s_2)$.

Then $n:=\prod_{i\geq 0}q_i$. By construction we have that $k_i\geq k_{i+1}$,  and  if $q_i=1$ then $k_{i+1}=k_i$, and  also $s_1\leq k_i$. 
Since $p_i$ and $q_i$ are relatively prime,  $q_i$ is a divisor of $s_1-s_2$.  In particular, $q_i\leq s_1\leq k_i$. 
Since $q_i\neq 1$ only if $k_i>k_{i+1}$, we deduce that $n\leq k!$, and since $\deg \hat h\leq d$, we have also that $n\leq d$ and therefore $n\leq\min\{k!,d\}$.

  

\medskip

The equality \eqref{eq:lim} implies that it suffices to run the Newton-Puiseux process only until the exponent $\frac{m_i}n$ becomes $d$. The sequence 
$\{m_i\}$ is strictly increasing and therefore after $d\cdot\min\{k!,d\}$ steps in the process we get that $m_i\geq dn$ and therefore we can compute the desired  limit \eqref{eq:lim} precisely.

The result  of this algorithmic procedure is a finite set of points $\cB\supset \cM$ with $\# \cB \le d$.

\subsection{Algorithm for finding a good disk at some point at infinity}\label{ss:disk} \ \\

 We use in particular the notations from \S\ref{ss:atyp}.
Let $(p,\lambda)\in \bX^{\infty}$ and consider an affine chart  with coordinates $(x,z)$, where  $p=[0:1:0]$ is the origin. We use in particular the notations from \S\ref{ss:atyp} We find  effectively the constants  $\e_0=\e_0(p,\lambda)>0$  and $\delta=\delta(\e_0)>0$ which fit in the local fibration Theorem \ref{t:localfibration}, namely such that: 

\smallskip
\noindent
(A).  for every  $0<\e\leq \e_0$, the circle $\partial D_{\e}\subset \bR$ intersects the curve germ  representative $\{g_\lambda=0\}\subset \bR^{2}$ transversally, and

\noindent
(B).  $\partial D_{\e}$ intersects $\{g_t=0\}$ transversally, for every $t\in]\lambda-\delta,\lambda+\delta[$. 

\medskip
Condition (A) amounts to finding the size of the Milnor disk, i.e. the distance:
\[ \Delta_{(p,\lambda)} := \dist \left(  (0,0), \left\{ z\frac{\partial g_\lambda}{\partial x}(x,z)-x\frac{\partial g_\lambda}
{\partial z}(x,z)=0\right\}  \cap \{g_\lambda=0\} \m \{(0,0)\} \right), \]
and we may then take  $\e_0 :=\Delta_{(p,\lambda)}/2$ if  $\Delta_{(p,\lambda)}>0$, and $\e_0 := 1/2$ if $\Delta_{(p,\lambda)} =0$.

\sloppy
After  fixing $\e_0$ we compute minimum $s >0$ of the function
$h(x,z):=g_\lambda^2+\left(z\frac{\partial g_\lambda}{\partial x}(x,z)-x\frac{\partial g_\lambda}
{\partial z}(x,z)\right)^2$  on $\partial D_{\e_{0}}$, thus have the inequality: 

\[ |g_\lambda|+ \left|z\frac{g_\lambda}{\partial x}(x,z)-x\frac{\partial g_\lambda}{\partial z}(x,z)\right|\geq\sqrt{s}.\]

We then have the following inequalities for  $(x,z)\in \partial D_{\e_{0}}$: 
\[ |g_t|+\left|z\frac{\partial g_t}{\partial x}-x\frac{\partial g_t}{\partial z}\right|=
|g_\lambda-(t-\lambda) z^d|+\left|z\frac{g_\lambda}{\partial x}-x\frac{\partial g_\lambda}{\partial z}+
dx(t-\lambda) z^{d-1}\right|  \geq \]
\[\geq |g_\lambda|+\left| z\frac{g_\lambda}{\partial x}(x,z)-x\frac{\partial g_\lambda}{\partial z}(x,z)\right| -
|(t-\lambda) z^d| - |dx(t-\lambda) z^{d-1}| \geq \sqrt s - |t-\lambda|(\e_{0}^d+d\e_{0}^d).\]
We need the length $|t-\lambda|$ such that the last term is positive.
Then $\delta(\e_{0}) :=\frac{\sqrt s}{\e_{0}^d(d+1)}$ answers to condition (B).

\

Here is the sequence of tests:\\

$\bullet$ Solve $\left\{ g_\lambda=0 ; 
z\frac{\partial g_\lambda}{\partial x}(x,z)-x\frac{\partial (g_\lambda)}{\partial z}(x,z)=0 \right\}$. 
We obtain the finite set $A_{(p,\lambda)}$.
\medskip 

$\bullet$ $\e_{0}:=\frac 12\min\{\|(x,z)\| \mid (x,z)\in A_{(p,\lambda)}\}$.
\medskip

$\bullet$ Solve $\left\{ z\frac{\partial h}{\partial x}(x,z)-x\frac{\partial h}{\partial z}(x,z)=0, x^2+z^2=\e_{0}^2
\right\}$, where \\ $h(x,z):=g^2_\lambda(x,z)+\left(z\frac{\partial g_\lambda}{\partial x}(x,z)-x\frac{\partial g_\lambda}
{\partial z}(x,z)\right)^2$. \
We obtain a finite set $B_{(p,\lambda)}$.
\medskip

$\bullet$ $\delta(\e_{0}):=\frac{\sqrt{\min\{h(x,z) \  \mid \ (x,z)\in B_{(p,\lambda)}\}} }{\e_{0}^d(d+1)}.$

\medskip

\subsection{Algorithm to determine   the existence of local vanishing and splitting  }\label{ss:algovanish} \

From the above algorithms we obtain  the following situation: we have got a finite set $\cB\supset \cM$ of points at infinity. At each such point $(p,\lambda)\in \cB$, in some affine chart $\bR^{2}$ with origin at $p$,
 we have defined a ``twin-box'' $D_{\e}\times ]\lambda - \delta, \lambda[\cup ]\lambda,  \lambda+  \delta[$, with effective constants $\e>0$ and $0<\delta\ll \e$.  After Corollary \ref{c:splitvan}, we may now choose any value $t$ in $]\lambda - \delta, \lambda[$,  or in $]\lambda,  \lambda+  \delta[$, respectively, and thus fix a disk $D := D_{\e}\times \{ t\}$ in which to test the existence of the phenomena of vanishing (V$_{(p,\lambda)}$) and splitting (S$_{(p,\lambda)}$) for the affine curve $X_{t}= \overline{f^{-1}(t)}$.
 
Our  test follows closely the nice algorithm given in \cite[Section 11.6]{BPR} which fits perfectly to our problem.

Let us denote  $\cC := X_{t}\subset \bR^2$.
From the previous algorithm (and from Theorem \ref{t:localfibration})  we have that $\cC \cap D$ is non-singular outside the origin and that $\cC \pitchfork \partial D$. Determining if there is vanishing or splitting at $0$, respectively,  means  to determine whether: \\

\noindent
(a).  $\cC \cap D$ has a loop passing through $0$, or \\
(b). $\cC \cap D$ has a connected component which does not pass through $0$.

\

One defines a multigraph $\Gamma$ (i.e.  we allow  more than one edge between two vertices) homeomorphic to $\cC\cap \overline D$ as follows. 

We consider the projection $p_x:\bR^2\to\bR$, $(x,y)\mapsto x$ and its restriction $\pi := p_{x | \cC}$.

We may assume that no irreducible component of $\cC$ is  vertical,  and we shall handle at the end the special case of one or more vertical components. 

\smallskip
\noindent
\textbf{The set of vertices $V(\Gamma)$.} Let $W$ denote the set of points consisting of the origin $0\in D$, all the points of local maxima and of local minima of the projection $\pi$, and the points of intersection $\cC\cap \partial D$. 

The set of vertices of our graph is defined as:
\[ V(\Gamma):= \cup_{u\in \pi(W)}\pi^{-1}(u). \]

The origin is a distinguished point. The  vertices of $\cC\cap\partial D$ are also special and we may attach them a certain colour. These will   play a special role  at the end of the algorithm.

\smallskip
\noindent
\textbf{The set of edges $E(\Gamma)$.}

For each vertex $q\in V$, let $L(q)$ be the number of real curve arcs of $\cC$ in $\overline D$ that have $q$ 
as an end-point,  and that are projected by $\pi$ to the left of  $\pi(q)$. Similarly,
let $R(q)$ be the number of real curve arcs of $\cC$ in $\overline D$ that have $q$
as an end point, and that are projected by $\pi$ to the right of 
$\pi(q)$. 

Note that for a point $q$ in $V(\Gamma)\cap D\setminus \{0\}$ we have $L(q)+R(q)=2$, whereas 
for $q\in\cC\cap\partial D$ we have $L(q)+R(q)=1$.

Let $\pi(V(\Gamma))$ be the ordered set $a_1< \cdots <a_k$. For each point $a_j$,  let 
$\pi^{-1}(a_j)$ be the ordered set  $q_j^1>q_j^2>\cdots$.  

For each  $j$, starting from 1 and until $k$ increasingly, we consider the set of points
$q_j$ starting with  $q_j^1$, then $q_j^2$ and so on decreasingly,  and do the following step:

(*) if  $R(q_j^k)>0$ then we draw an edge
between $q_j^k$ and the first vertex  $q_{j+1}^s$ (i.e. the first index $s$ starting from 1)  with the property $L(q_{j+1}^s)>0$. 
  Decrease $R(q_j^k)$ and $L(q_{j+1}^s)$ by 1.
 
 \smallskip
 
After a certain number of steps, actually $\le 3d^2$,
all the values $R(q)$ and $L(q)$ decrease to 0, and the algorithm stops. The result  is our set of edges $E(\Gamma)$, hence our multigraph $\Gamma$.

\smallskip

\smallskip
\noindent
\textbf{Testing the  multigraph $\Gamma$.}

Our initial problem amounts now to check (by a standard algorithm) whether:

\noindent
(a'). $\Gamma$ has a cycle containing the origin $0\in D$, which corresponds to (a) above. This  means that there is a vanishing (V$_{(p,\lambda)}$),  or \\
(b').  there is path between two vertices of $\cC\cap\partial D$ which does not pass through $0$. 
This corresponds to (b) above, which  means that there is a splitting (S$_{(p,\lambda)}$).

Coming back to the possible vertical components:  if there are vertical components, then we only have to check if they pass through the origin or not, since in the former case the component is neutral for the above algorithm, and in the later case, it produces a splitting (S$_{(p,\lambda)}$).






\begin{thebibliography}{aaaaa}

\bibitem[ACT]{ACT}
R. N. Ara\' ujo dos Santos, Y. Chen, M. Tib\u ar 
\emph{Real polynomial maps and singular open books at infinity}, Math. Scand. 118 (2016), 57-69.


\bibitem[BS]{BS} T. Bajbar,  O. Stein, \emph{Coercive polynomials and their {N}ewton polytopes}.  SIAM J. Optim.  25 (2015), no. 3, 1542-1570.

\bibitem[BPR]{BPR}  S. Basu,  R. Pollack, M-F. Roy, Algorithms in real algebraic geometry. 
Second edition.  Algorithms and Computation in Mathematics, 10. Springer-Verlag, Berlin, 2006.

\bibitem[BK]{BK} E. Brieskorn, H. Knorrer, \emph{Plane algebraic curves},  Birkhauser Verlag, Basel, 1986.


\bibitem[CP]{CP}
M. Coste, M.J. de la Puente, \emph{Atypical values at infinity of a polynomial function on the real plane: an erratum, and an algorithmic criterion.} J. Pure Appl. Algebra 162 (2001), no. 1, 23-35.
	
\bibitem[DT]{DT}
L.R.G. Dias,  M.~Tib\u ar,
\emph{Detecting bifurcation values at infinity of real polynomials},   Math. Z.   279 (2015), 311-319.
	
\bibitem[DRT]{DRT}
L.R.G. Dias, M.A.S. Ruas, M. Tib\u ar, \textit{Regularity at infinity of real mappings and a Morse-Sard theorem}, J. Topology, 5 (2012), no. 2, 323-340. 

\bibitem[DTT]{DTT} L.R.G. Dias, S. Tanab\'e, M. Tibar, {\em Towards effective detection of the bifurcation locus of real polynomial maps}, Foundations of Computational Mathematics 17 (2017),  837-849.
	
	
	
\bibitem[HN]{HN}
H\`a H.V., Nguyen T.T.,  \emph{Atypical values at infinity of polynomial and rational functions on an algebraic surface in $\bR^n$.} Acta Math. Vietnam. 36 (2011), no. 2, 537-553.
	
\bibitem[HP1]{HP1} Hà, H.V., Pham, T.S. {\em Minimizing polynomial functions}, Acta Math. Vietnam. 32  (2007)  no.1, 71-82.	

\bibitem[HP2]{HP}
H\`a H.V., Pham T.S., \emph{Global optimization of polynomials using the truncated tangency variety and sums
of squares}. SIAM J. Optim. 19 (2008), no. 2, 941-951.
	
\bibitem[INP]{INP}
M.  Ishikawa, T.T. Nguyen, T.S. Pham, \emph{Bifurcation sets of real polynomial functions of two variables and Newton polygons}, J. Math. Soc. Japan, 71 (2019), no. 4, 1201-1222. 

\bibitem[JK]{JK}
Z. Jelonek, K. Kurdyka, 
\emph{Reaching generalized critical values of a polynomial}. Math. Z. 276 (2014), no. 1-2, 557-570.

\bibitem[JeT]{JeT} 
Z. Jelonek, M. Tib\u ar, Detecting asymptotic non-regular values by polar curves. Int. Math. Res. Not. IMRN 2017, no. 3, 809-829. 
	
\bibitem[JoT]{JoT}
C. Joi\c ta, M. Tib\u ar,   \emph{Bifurcation values of families of real curves},  Proc.Royal Soc. Edinburgh Sect.A 147,  6 (2017), 1233-1242.

	
\bibitem[KPT]{KPT} D.S. Kim, T.S.  Pham, N.V.  Tuyen, \emph{On the existence of {P}areto solutions for polynomial vector optimization problems}. Math. Program. 177 (2019),  no. 1-2, Ser. A, 321-341. 

\bibitem[Ki1]{Ki} H.C. King,  \emph{Topological type of isolated critical points.}
Ann. of Math. (2) 107 (1978), no. 2, 385-397. 

\bibitem[Ki2]{Ki0} H.C. King,  \emph{Topological type in families of germs}. Invent. Math. 62 (1980/81), no. 1, 1-13. 
 
	
\bibitem[Mi]{Mi} 
 J.N. Milnor, Singular points of complex hypersurfaces. Annals of Mathematics Studies, No. 61 Princeton University Press, Princeton, N.J.; University of Tokyo Press, Tokyo 1968.
 
 \bibitem[NZ]{NZ} A. N\' emethi, A. Zaharia, {\em On the bifurcation set of a polynomial function and Newton boundary}, Publ. Res. Inst. Math. Sci. 26  (1990), no. 4,  681-689.
 
 \bibitem[Pal]{Pa}  C.F.B. Palmeira, \emph{Open manifolds foliated by planes.} Ann. of Math. 107 (1978), 109-131.
 
 \bibitem[Par]{Pa1}
 	A. Parusi\'nski, {\em  On the bifurcation set of complex polynomial with isolated
 		singularities at infinity}, Compositio Math. 97 (1995), no. 3, 369-384.
 
\bibitem[Sa]{Sa} 
M. Safey El Din, {\em Testing sign conditions on a multivariate polynomial and applications}, Math. Comput. Sci. 1 (2007) 177-207.

\bibitem[Sc]{Sc}
P. Scheiblechner, \emph{On a generalization of Stickelberger's theorem.} J. Symbolic Comput. 45 (2010), no. 12, 1459-1470. 

\bibitem[ST]{ST} D. Siersma, M. Tib\u ar,  {\it Singularities at infinity and their vanishing cycles},
Duke Math. J. 80 (1995), no. 3, 771-783.

 \bibitem[Ti1]{Ti-compo} M. Tib\u ar,  {\it Topology at infinity of polynomial 
mappings and Thom condition},   Compositio Math. 111 (1998), 89-109.

 \bibitem[Ti2]{Ti-reg}  
M. Tib\u ar, {\it Regularity at infinity of real and complex polynomial maps},  
    in: Singularity Theory, The C.T.C Wall Anniversary Volume,
 LMS Lecture Notes Series 263 (1999), 249-264. Cambridge  University Press.	

\bibitem[Ti3]{Ti-book}
M.~Tib\u{a}r,  Polynomials and vanishing cycles. Cambridge Tracts in Mathematics, 170. Cambridge University Press, Cambridge, 2007.

	
\bibitem[TZ]{TZ}
M.~Tib\u{a}r,  A.~Zaharia, 
\emph{Asymptotic behaviour of families of real curves},  Manuscripta Math. 99 (1999), no.3, 383-393.


\bibitem[Wa]{Wa} C. T. C. Wall,  Singular points of plane curves. 
London Mathematical Society Student Texts, 63. Cambridge University Press, Cambridge, 2004. 

\end{thebibliography}
 \end{document}